\documentclass[a4paper,11pt,reqno]{amsart}

          \usepackage{amssymb}
          \usepackage{amsmath}
          \usepackage{amsthm}
          \usepackage{amsfonts}
          \usepackage[english]{babel}
          \usepackage[utf8]{inputenc}
          \usepackage{enumerate}
          \usepackage{graphicx}

\usepackage{color}



\usepackage[small,nohug]{diagrams}

 \usepackage[unicode,colorlinks,plainpages=false,hyperindex=true,bookmarksnumbered=true,bookmarksopen=false,pdfpagelabels]{hyperref}
 \hypersetup{urlcolor=cyan,linkcolor=blue,citecolor=red,colorlinks=false} 

\makeatletter
\renewcommand*{\p@section}{\S\,}
\renewcommand*{\p@subsection}{\S\,}
\renewcommand*{\p@subsubsection}{\S\,}
\makeatother

\usepackage{etoolbox}
\makeatletter
\patchcmd{\@maketitle}
  {\ifx\@empty\@dedicatory}
  {\ifx\@empty\@date \else {\vskip3ex \centering\footnotesize\@date\par\vskip1ex}\fi
   \ifx\@empty\@dedicatory}
  {}{}
\patchcmd{\@adminfootnotes}
  {\ifx\@empty\@date\else \@footnotetext{\@setdate}\fi}
  {}{}{}
\makeatother

\def\BibTeX{{\rm B\kern-.05em{\sc i\kern-.025em b}\kern-.08em
    T\kern-.1667em\lower.7ex\hbox{E}\kern-.125emX}}

\hfuzz1pc 

\newtheorem{thm}{Theorem}[section]

\newtheorem{lem}[thm]{Lemma}
\newtheorem{prop}[thm]{Proposition}

\theoremstyle{definition}
\newtheorem{defn}{Definition}[section]
\theoremstyle{remark}
\newtheorem{rem}{Remark}[section]
\newtheorem{exmp}{Example}[section]


\numberwithin{equation}{section}

\def\restriction#1#2{\mathchoice
              {\setbox1\hbox{${\displaystyle #1}_{\scriptstyle #2}$}
              \restrictionaux{#1}{#2}}
              {\setbox1\hbox{${\textstyle #1}_{\scriptstyle #2}$}
              \restrictionaux{#1}{#2}}
              {\setbox1\hbox{${\scriptstyle #1}_{\scriptscriptstyle #2}$}
              \restrictionaux{#1}{#2}}
              {\setbox1\hbox{${\scriptscriptstyle #1}_{\scriptscriptstyle #2}$}
              \restrictionaux{#1}{#2}}}
\def\restrictionaux#1#2{{#1\,\smash{\vrule height .8\ht1 depth .85\dp1}}_{\,#2}} 


\newcommand{\N}{\ensuremath{\mathbb{N}}}

\newcommand{\R}{\ensuremath{\mathbb{R}}}
\newcommand{\Z}{\ensuremath{\mathbb{Z}}}
\newcommand{\Vect}{\operatorname{Vec}}

\newcommand{\ev}{\operatorname{ev}}

\begin{document}

\title{Introduction to graded geometry}

%
\author[M. Fairon]{Maxime Fairon}
 \address{School of Mathematics\\
         University of Leeds\\
         Leeds LS2 9JT\\ United Kingdom}
 \email{mmmfai@leeds.ac.uk}

\date{\today}

\begin{abstract}
This paper aims at setting out the basics of $\Z$-graded manifolds theory. 
We introduce $\Z$-graded manifolds from local models and give some of their properties. 
The requirement to work with a completed graded symmetric algebra to define functions is made clear. 
Moreover, we define vector fields and exhibit their graded local basis. The paper also reviews some correspondences 
between differential $\Z$-graded manifolds and algebraic structures.

\smallskip
\noindent \textbf{Keywords.} Supergeometry, graded manifold, differential graded manifold, $Q$-manifold. 

\noindent \textbf{2010 MSC.} 58A50, 51-02
\end{abstract}

\maketitle

\section{Introduction}
\label{SectIntro}

 In the $70$s, supermanifolds were introduced and studied to provide a geometric background to the developing  
theory of supersymmetry. 
In the  Berezin-Leites \cite{BerLei} and Kostant \cite{Kost77} approach, they are defined as $\Z_2$-graded 
locally ringed spaces. More precisely, this means that a supermanifold is a pair $(|N|,\mathcal{O}_N)$ such that 
$|N|$ is a topological space and 
$\mathcal{O}_N$ is a sheaf of $\Z_2$-graded algebras which satisfies, for all  sufficiently small open subsets $U$,
\begin{equation}
 \mathcal{O}_N(U) \simeq \mathcal{C}^\infty_{\R^n}(U)\otimes \bigwedge \R^m\,\,, \nonumber 
\end{equation}
where $\bigwedge \R^m$ is endowed with its canonical $\Z_2$-grading. 
Accordingly, a local coordinate system on a supermanifold splits into $n$ smooth coordinates on $U$ and $m$ elements of a basis on 
$\R^m$. Both types of coordinates are distinguished via  
the parity function $p$ with values in $\Z_2=\{0,1\}$. Namely, the first 
$n$ coordinates are said to be even ($p=0$), while the others are odd ($p=1$). 

From the late $90$s, the introduction of an integer grading was necessary in some topics related to Poisson geometry, 
Lie algebroids and Courant algebroids. These structures could carry the integer grading only, as it appeared in the work of 
Kontsevitch \cite{Konts03}, Roytenberg \cite{Roy02} and Severa \cite{SeveraHomotopy}, or could be endowed 
with both $\Z_2$- and $\Z$-gradings, as it was introduced by Voronov \cite{Voronov02}. The first approach, that we follow,   
led to the definition of a $\Z$-graded manifold given by Mehta \cite{ThesisPing}. 
Similarly to a supermanifold, a $\Z$-graded manifold is a graded locally ringed space. Its structure sheaf $\mathcal{O}_N$ is a 
sheaf of $\Z$-graded algebras which is given locally by 
\begin{equation}
 \mathcal{O}_N(U) \simeq \mathcal{C}^\infty_{\R^n}(U)\otimes 
\overline{\mathbf{S}\mathcal{W}}\,\,, \nonumber 
\end{equation}
where $\mathcal{W}=\oplus_i \mathcal{W}_i$ is a real $\Z$-graded vector space whose component of degree zero satisfies 
$\mathcal{W}_0=\{0\}$. Here, $\overline{\mathbf{S}\mathcal{W}}$ denotes the algebra of formal power series of supercommutative products of 
elements in $\mathcal{W}$.  Therefore, the local coordinate system of the $\Z$-graded manifold
can be described by $n$ smooth coordinates and a graded basis of $\mathcal{W}$. In particular, it might admit generators 
of even degree but not of smooth type. This implies that such coordinates do not square to zero and can generate polynomials of 
arbitrary orders. Their existence requires the introduction of formal power series to obtain the locality of every stalk of the sheaf. 
This condition, which has not always been accurately considered in past works, 
also occurs with $\Z^n_2$-grading for $n\geqslant 2$, as discussed in \cite{Covolo}.

\vspace{0.2cm}

Complete references on supermanifolds can be found (\emph{e.g.} \cite{CCFsupersym,DelMor99,Kost77,Varadarajansupersym}), 
but mathematicians have not yet prepared monographs on graded manifolds. 
The aim  of this paper is to show what would appear at the beginning of such a book and it aspires to be a comprehensive introduction 
to $\Z$-graded manifolds theory for both mathematicians and theoretical physicists. 

The layout of this article is inspired by \cite{CCFsupersym,Varadarajansupersym} and the reader needs a little knowledge of 
sheaf theory from  \ref{SubSectGradedRingSp} onward. We first introduce graded vector spaces, graded rings and graded algebras 
in \ref{SectBasis}. These objects allow us to define graded locally ringed spaces and their morphisms. After that, 
we introduce graded domains and we show that their stalks are local. 
In particular, we notice that this property follows from the introduction of formal power series to define sections. 
The graded domains are the local models of graded manifolds, 
which are studied in \ref{SectionGradedManifold}. Specifically, we give the elementary properties of graded manifolds and define their 
vector fields. In particular, we prove that there exists a local graded basis of the vector fields which is related to the local 
coordinate system of the graded manifold. Finally, we illustrate in \ref{SectionApplication} the theory of $\Z$-graded manifolds with 
a few theorems stating the correspondence between differential graded manifolds and algebraic structures. These important  
examples are usually referred to as $Q$-manifolds. Other applications can be found in \cite{CattaneoSchatz,Qiu2011}.

\vspace{0.2cm}

\textbf{Conventions.} In this paper, $\N$ and $\Z$ denote the set of nonnegative integers and the set of integers, respectively. We write 
 $\N^\times$ and $\Z^\times$ when we consider these sets deprived of zero. 
Notice that some authors use the expression 
\emph{graded manifolds} to talk about supermanifolds with an additional $\Z$-grading (see \cite{Voronov02}), 
but we only use this expression in the present paper to refer to $\Z$-graded manifolds.

\section{Preliminaries}
\label{SectBasis}

\subsection{Graded algebraic structures}
\label{SubSectGradedAlgebra}


\subsubsection{Graded vector space}
A \emph{$\Z$-graded vector space} is a direct sum $\mathcal{V}=\bigoplus_{i}\mathcal{V}_i$ of 
a collection of $\R$-vector spaces $(\mathcal{V}_i)_{i\in\Z}$.
If a nonzero element $v\in \mathcal{V}$ belongs to one of the $\mathcal{V}_i$, one says that it is \emph{homogeneous} of degree $i$. 
We write $|\cdot|$ for the application which assigns its degree to a homogeneous element. 
Moreover, we only consider $\Z$-graded vector spaces of \emph{finite type}, 
which means that $\mathcal{V}=\bigoplus_{i}\mathcal{V}_i$ 
is such that $\text{dim}(\mathcal{V}_i)<\infty$ for all $i\in \Z$. 
A \emph{graded basis} of $\mathcal{V}$ is a sequence $(v_\alpha)_\alpha$ of homogeneous elements of $\mathcal{V}$ such that the 
subsequence of all elements $v_\alpha$ of degree $i$ is a basis of the vector space $\mathcal{V}_i$, for all $i\in\Z$.

If $\mathcal{V}$ is a $\Z$-graded vector space, $\mathcal{V}[k]$ denotes the $\Z$-graded vector space $\mathcal{V}$ lifted by $k\in\Z$ : 
$(\mathcal{V}[k])_i=\mathcal{V}_{i-k}$ for all $i\in \Z$. 
The $\Z$-graded vector space $\mathcal{V}$ with reversed degree is denoted by $\Pi\mathcal{V}$ and 
satisfies $(\Pi\mathcal{V})_i=\mathcal{V}_{-i}$ for all $i \in \Z$.

Given two $\Z$-graded vector spaces $\mathcal{V}$ and $\mathcal{W}$, their direct sum $\mathcal{V}\oplus \mathcal{W}$ 
can be defined with the grading  $(\mathcal{V}\oplus \mathcal{W})_i=\mathcal{V}_i\oplus \mathcal{W}_i$, as well as the  
tensor product $\mathcal{V}\otimes \mathcal{W}$ with 
$(\mathcal{V}\otimes \mathcal{W})_i = \oplus_{j\in\Z} \mathcal{V}_j \otimes \mathcal{W}_{i-j}$. 
Both constructions are associative.
A \emph{morphism of $\Z$-graded vector spaces} $T: \mathcal{V} \rightarrow \mathcal{W}$ is a linear map which 
preserves the degree : $T(\mathcal{V}_i) \subseteq \mathcal{W}_i$ for all $i\in\Z$. 
We write $\text{Hom}(\mathcal{V},\mathcal{W})$ for the set of all morphisms between $\mathcal{V}$ and $\mathcal{W}$. 

The category of $\Z$-graded vector spaces is a symmetric monoidal category. It is equipped with the  
nontrivial commutativity isomorphism 
$\mathbf{c}_{\mathcal{V},\mathcal{W}}:\mathcal{V}\otimes \mathcal{W}\to\mathcal{W}\otimes\mathcal{V}$ 
which, to any homogeneous elements $v \in \mathcal{V}$ and $w \in \mathcal{W}$, 
assigns the homogeneous elements $\mathbf{c}_{\mathcal{V},\mathcal{W}}(v\otimes w) = (-1)^{|v||w|} w\otimes v$.  

Finally, duals of $\Z$-graded vector spaces can be defined. Given a $\Z$-graded vector space $\mathcal{V}$, 
\underline{Hom}$(\mathcal{V},\R)$ denotes the $\Z$-graded vector space which contains all the linear maps from $\mathcal{V}$ to 
$\R$. Its grading is defined by setting  $\left(\text{\underline{Hom}}(\mathcal{V},\R)\right)_i$ to be the set of linear maps $f$ such 
that $f(v)\in \R$ if $v\in \mathcal{V}_{-i}$. It is the dual of $\mathcal{V}$ and we write 
$\mathcal{V}^*=$\underline{Hom}$(\mathcal{V},\R)$. 
One can show that the latter satisfies $(\mathcal{V}^*)_i=(\mathcal{V}_{-i})^*$, for all $i\in\Z$.

\begin{rem}
From now on, we will refer to $\Z$-graded objects simply as graded objects. However, we will keep the complete writing in the definitions 
or when it is needed, to keep it clear that the grading is taken over $\Z$.
\end{rem}

\begin{rem}
 The different notions introduced for $\Z$-graded vector spaces
extend to $\Z$-graded $R$-modules over a ring $R$. 
In particular, taking $R=\Z$, these constructions apply to any $\Z$-graded abelian group (which is a direct sum of 
abelian groups) seen as a direct sum of $\Z$-modules.
\end{rem}

\subsubsection{Graded ring}
\label{sSubSectRings}

A \emph{$\Z$-graded ring} $\mathcal{R}$ is a $\Z$-graded abelian group $\mathcal{R}=\bigoplus_{i}\mathcal{R}_i$ with a morphism 
$\mathcal{R}\otimes \mathcal{R}\to \mathcal{R}$ called the \emph{multiplication}. By definition, it satisfies 
$\mathcal{R}_i \mathcal{R}_j \subseteq \mathcal{R}_{i+j}$. 

A graded ring $\mathcal{R}$ is \emph{unital} if it admits an element $1$ such that $1\,r=r=r\, 1$ for any $r\in \mathcal{R}$. 
In that case, the element $1$ satisfies $1\in \mathcal{R}_0$. The graded ring $\mathcal{R}$ is \emph{associative} if $(ab)c=a(bc)$ 
for every $a,b,c\in \mathcal{R}$. 
The graded ring $\mathcal{R}$ is \emph{supercommutative} when $ab=(-1)^{|a||b|}ba$ for any homogeneous elements $a,b\in \mathcal{R}$. 
This means that the multiplication is invariant under the commutativity isomorphism $\mathbf{c}$.  

We can introduce the definitions of left ideal, right ideal and two-sided ideal (which is referred to as \emph{ideal}) 
of a graded ring in the same manner as in the non-graded case. It is easy to see that, 
in a supercommutative associative unital graded ring, a left (or right) ideal is an ideal. 
A \emph{homogeneous ideal} of $\mathcal{R}$ is an ideal $I$ such that $I=\bigoplus_kI_k$, for $I_k=I\cap \mathcal{R}_k$. 
Equivalently, a homogeneous ideal $I$ is an ideal generated by a set of homogeneous elements 
$H\subseteq \cup_k \mathcal{R}_k$. In that case, we write $I=\langle H \rangle$ when we want to emphasize the generating set of $I$. 
An homogeneous ideal  $I\subsetneq \mathcal{R}$ is said to 
be \emph{maximal} if, when $I\subseteq J$ with $J$ another homogeneous ideal, then either $J=I$ or $J=\mathcal{R}$. 
A \emph{local} graded ring $\mathcal{R}$ is a graded ring which admits a unique maximal homogeneous ideal.

Define $\mathcal{J}_\mathcal{R}$ as the ideal generated by the elements of $\mathcal{R}$ with nonzero degree. We can 
consider the quotient $\mathcal{R}/\mathcal{J}_\mathcal{R}$ and the projection map 
$\pi : \mathcal{R} \to \mathcal{R}/\mathcal{J}_\mathcal{R}$. 
We say that $\mathcal{R}$ is a \emph{$\pi$-local graded ring} if it admits a unique maximal homogeneous ideal $\mathfrak{m}$ 
such that $\pi(\mathfrak{m})$ is the unique maximal ideal of $\mathcal{R}/\mathcal{J}_\mathcal{R}$. Note that a local graded ring 
is always $\pi$-local, but the converse is not true (see Remark \ref{RemLocality} below). 

\begin{rem}
  The different notions introduced for $\Z$-graded rings extend to any object with an underlying $\Z$-graded ring structure.
\end{rem}

\subsubsection{Graded algebra}

A $\Z$-\emph{graded algebra} is a $\Z$-graded vector space  $\mathcal{A}$ endowed with a morphism  
$\mathcal{A}\otimes \mathcal{A}\to \mathcal{A}$ called \emph{multiplication}. Alternatively, it is a graded ring with a structure of 
$\R$-module. 
 
Consider a $\Z$-graded vector space $\mathcal{W}$.
The \emph{free $\Z$-graded associative algebra} generated by $\mathcal{W}$ is    
$\bigotimes \mathcal{W} = \bigoplus_{k\in \N}\otimes^k \mathcal{W}$. The product on $\bigotimes \mathcal{W}$ is 
the concatenation, while its degree is induced by the degree on $\mathcal{W}$.  
Then, the \emph{symmetric $\Z$-graded associative algebra} generated by $\mathcal{W}$ is given by     
$\mathbf{S} \mathcal{W}=\bigotimes \mathcal{W} / J$, where  
$J= \left\langle\{ u\otimes v -(-1)^{|u||v|} v \otimes u  \, \, |\,\, u,v\in \mathcal{W} \, \text{homogeneous} \}\right\rangle$. 

This object gathers the non-graded exterior and symmetric algebras.  
Indeed, split $\mathcal{W}$ as $\mathcal{W}=\mathcal{W}_{\text{even}}\oplus\mathcal{W}_{\text{odd}}$ with 
$\mathcal{W}_{\text{even}}=\oplus_i \mathcal{W}_{2i}$ and $\mathcal{W}_{\text{odd}}=\oplus_i \mathcal{W}_{2i+1}$. Then  
we obtain that $\mathbf{S}\mathcal{W}\simeq  \mathbf{S}\mathcal{W}_{\text{even}} \otimes \bigwedge \mathcal{W}_{\text{odd}}$ 
as $\Z_2$-graded algebras (with the two operations on the right considered on non-graded vector spaces). 

\begin{rem}
 The graded algebra $\mathbf{S} \mathcal{W}$ can be seen as the set of polynomials in a graded basis of the graded vector space 
$\mathcal{W}$. We write $\overline{\mathbf{S} \mathcal{W}}$ to indicate its completion by allowing formal power series. 
Assume that $(w_\alpha)_\alpha$ is a graded basis of $\mathcal{W}$. Then, any element $s\in \overline{\mathbf{S} \mathcal{W}}$ admits a 
unique decomposition with respect to the $(w_\alpha)_\alpha$ :  
\begin{equation}
\label{EqDecompoSW}
 s=s_0 + \sum_{K=1}^\infty \sum_{\alpha_1\leqslant \ldots \leqslant \alpha_K} 
s_{\alpha_1\ldots \alpha_K} w_{\alpha_1}\ldots w_{\alpha_K},
\end{equation}
where $s_0$ and all the $s_{\alpha_1\ldots \alpha_K}$ are real numbers. We use the multiplicative notation 
$w_{\alpha_1}\ldots w_{\alpha_K}$ to denote $w_{\alpha_1} \otimes \ldots \otimes w_{\alpha_K}$ modulo 
the supercommutativity in $\overline{\mathbf{S}\mathcal{W}}$.
\end{rem}

\subsection{Graded ringed space}
\label{SubSectGradedRingSp}

\begin{defn}
A \emph{$\Z$-graded ringed space} $S$ is a pair $(|S|,\mathcal{O}_S)$ such that  
$|S|$ is a topological space and $\mathcal{O}_S$ is a sheaf of associative unital supercommutative $\Z$-graded rings, called the  
\emph{structure sheaf} of $S$. 
A \emph{$\Z$-graded locally ringed space} is a $\Z$-graded ringed space  
$S=(|S|,\mathcal{O}_S)$ whose stalks $\mathcal{O}_{S,x}$ are local graded rings for all $x \in |S|$.
\end{defn}
\begin{exmp}
 Any locally ringed space is a graded locally ringed space whose sheaf has only elements of degree zero. 
\end{exmp}

\begin{defn}
A \emph{morphism of $\Z$-graded ringed spaces} 
$\phi:(|M|,\mathcal{F})\rightarrow(|N|,\mathcal{G})$ 
is a pair $(|\phi|,\phi^*)$, where $|\phi|:|M| \rightarrow |N|$ is a morphism of topological spaces and 
$\phi^* : \mathcal{G}\rightarrow \phi_* \mathcal{F}$ is a morphism of sheaves of  associative unital supercommutative $\Z$-graded rings, 
which means that it is a collection of morphisms $\phi_V :\mathcal{G}(V)\rightarrow \mathcal{F}(|\phi|^{-1}(V))$ for all $V \in |N|$.

\noindent A \emph{morphism of $\Z$-graded locally ringed spaces} 
$\phi:(|M|,\mathcal{F})\rightarrow(|N|,\mathcal{G})$ is a morphism of $\Z$-graded ringed spaces such that, 
for all $x\in |M|$, the induced morphism on the stalk $\phi_x :\mathcal{G}_{|\phi|(x)}\rightarrow \mathcal{F}_x$ is local, 
which means that 
$\phi_x^{-1}\left( \mathfrak{m}_{_{M,x}} \right) = \mathfrak{m}_{_{N,|\phi|(x)}}$, where  
$\mathfrak{m}_{_{M,x}}$ (respectively $\mathfrak{m}_{_{N,|\phi|(x)}}$) is the maximal homogeneous ideal of $\mathcal{F}_x$ 
(resp. $\mathcal{G}_{|\phi|(x)}$).
\end{defn}

Let $(|M|,\mathcal{F})$ and $(|N|,\mathcal{G})$ be two graded locally ringed spaces. 
Assume that for all $x\in |M|$, there exists an open set  $V\subseteq|M|$ containing $x$ and an open set $\widetilde{V}\subseteq |N|$ 
such that there exists an isomorphism of graded locally ringed spaces
$\phi_V:(V,\restriction{\mathcal{F}}{V})\to(\widetilde{V},\restriction{\mathcal{G}}{\widetilde{V}})$. Then, we say that 
the graded locally ringed space $(|M|,\mathcal{F})$ is \emph{locally isomorphic to} $(|N|,\mathcal{G})$.

\begin{exmp}
 A smooth manifold is a locally ringed space locally isomorphic to $(\R^n,\mathcal{C}^\infty_{\R^{n}})$. This can be rephrased as 
a local isomorphism of graded locally ringed spaces whose  sheaves have only elements of degree zero. 
\end{exmp}

\subsection{Graded domain}
\label{SubSectGradedDomain}

\begin{defn}
\label{DefDomGrad}
Let $(p_j)_{j\in \Z}$ be a sequence of non-negative integers  
 and $\mathcal{W}=\bigoplus_{j} \mathcal{W}_j$ be a $\Z$-graded vector space of dimension $(p'_j)_{j \in \Z}$ with 
 $p'_0=0$ and $p'_j=p_j$ otherwise. 
 We say that $U^{(p_j)}=(U,\mathcal{O}_U)$ is a \emph{$\Z$-graded domain} of dimension $(p_j)_{j\in \Z}$, 
if $U$ is an open subset of $\R^{p_0}$ and for all $V\subseteq U$, 
$\mathcal{O}_U(V) =\mathcal{C}^\infty_{\R^{p_0}}(V)\otimes\overline{\mathbf{S}\mathcal{W}}$.
\end{defn}

\begin{exmp}
Let $(p_j)_{j\in \Z}$ be a sequence of non-negative integers. 
We write $\R^{(p_j)}=\left(\R^{p_0}\right)^{(p_j)}$ to indicate the graded domain of dimension $(p_j)_{j\in \Z}$ 
and topological space $\R^{p_0}$.
\end{exmp}

A \emph{global coordinate system} on $U^{(p_j)}$ is given by a global coordinate system $(t_1,\ldots,t_{p_0})$ on $U$, 
and by a graded basis $(w_\alpha)_\alpha$ of $\mathcal{W}$. We write such a system on a graded domain as $(t_i,w_\alpha)_{i,\alpha}$.
From Equation \eqref{EqDecompoSW}, we see that any section $f\in \mathcal{O}_U(V)$, with $V\subseteq U$, 
admits a unique decomposition in the global coordinate system :  
\begin{equation}
\label{EqDecompo}
 f=f_0 + \sum_{K=1}^\infty \sum_{\alpha_1\leqslant \ldots \leqslant \alpha_K} 
f_{\alpha_1\ldots \alpha_K} w_{\alpha_1}\ldots w_{\alpha_K},
\end{equation}
where $f_0$ and all the $f_{\alpha_1\ldots \alpha_K}$ are smooth functions on $V$.

Let $\mathcal{J}(V)$ be the \emph{ideal generated by all sections with nonzero degree} on the open subset $V$. 
The map $\pi_1:\mathcal{O}_U(V)\to\mathcal{C}^\infty_{U}(V):f\mapsto f_0$, with $f_0$ defined in Equation \eqref{EqDecompo}, admits 
$\mathcal{J}(V)$ as kernel. Moreover, it is a left inverse for the embedding $\mathcal{C}^\infty_{U}(V)\to\mathcal{O}_U(V)$. 

If we write $\pi$ for the canonical projection  $\mathcal{O}_U(V)\to \mathcal{O}_U(V)/\mathcal{J}(V)$, then $\pi_1$ implies the 
existence of an isomorphism between $\mathcal{C}_U^\infty(V)$ and $\mathcal{O}_U(V)/\mathcal{J}(V)$ as shown in the 
following diagram : 
\begin{diagram}
\mathcal{C}^\infty_{U}(V) &&\pile{ \lDotsto^{\pi_1} \\ \rInto}  &&\mathcal{O}_U(V)\\
&\rdTo(3,2)_{\simeq} &&&\dOnto_\pi  \\
&        &&&\mathcal{O}_U(V)/\mathcal{J}(V)
\end{diagram}
The map $\pi_1$ can be used for evaluating a section at a point $x\in V$. It is the \emph{value} at $x$.

\begin{defn}
Let $U^{(p_j)}=(U,\mathcal{O}_U)$ be a $\Z$-graded domain of dimension $(p_j)_{j\in\Z}$ and $V\subseteq U$ be an open subset. 
Let $x\in V$ and $f\in \mathcal{O}_U(V)$. 
The \emph{value} (or \emph{evaluation}) of $f$ at $x$ is defined as $f(x)=\pi_1(f)(x)\in\R$.
\end{defn} 

The \emph{ideal of sections with null value} at $x\in V$, written as $\mathcal{I}_x(V)$,  is  given by 
$\mathcal{I}_x(V)=\left\{ f\in\mathcal{O}_U(V) \big{|}f(x)=0\right\}$. Remark that 
$\mathcal{J}(V)\subseteq\mathcal{I}_x(V)$. 
For any $r\in \N^\times$, we can define $\mathcal{I}_x^r(V)$ as $
\left\{f\in \mathcal{O}_{U}(V)\,\big{|}\,\exists  f_1,\ldots,f_r\in\mathcal{I}_x(V), \, f=f_1\ldots f_r\right\}$.

\begin{lem}\emph{(Hadamard's lemma).}\label{LemHadamardDomGrad} 
 Let $U^{(p_j)}=(U,\mathcal{O}_U)$ be a $\Z$-graded domain of dimension $(p_j)_{j\in\Z}$ with global coordinate system 
$(t_i,w_\alpha)_{i,\alpha}$. Consider an open subset $V\subseteq U$, a point $x\in V$ and a section  $f\in \mathcal{O}_U(V)$. 
Then, for all $k\in \N$, there exists a polynomial $P_{k,x}$ of degree $k$ in the variables 
$\big(t_i-t_i(x)\big)_{i}$ and $(w_\alpha)_{\alpha}$ such that  $f - P_{k,x} \in \mathcal{I}_x^{k+1}(V)$.
\end{lem}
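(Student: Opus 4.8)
The statement is a graded analogue of the classical Hadamard lemma, so the plan is to decompose $f$ along the graded coordinates and treat the smooth part by the ordinary Hadamard/Taylor argument. Writing $f$ in the global coordinate system as in Equation \eqref{EqDecompo},
\[
f = f_0 + \sum_{L=1}^\infty \sum_{\alpha_1\leqslant\ldots\leqslant\alpha_L} f_{\alpha_1\ldots\alpha_L}\,w_{\alpha_1}\ldots w_{\alpha_L},
\]
each coefficient $f_0, f_{\alpha_1\ldots\alpha_L}$ is a smooth function on $V$, hence admits a classical Taylor expansion at $x$: for each $m\in\N$ there is a polynomial $Q^{(m)}$ in the $(t_i - t_i(x))_i$ of degree $m$ with $f_{\alpha_1\ldots\alpha_L} - Q^{(m)}_{\alpha_1\ldots\alpha_L}$ vanishing to order $m+1$ at $x$ in the ordinary sense (it lies in the $(m+1)$-st power of the ideal of smooth functions vanishing at $x$). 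The idea is then: a monomial $w_{\alpha_1}\ldots w_{\alpha_L}$ already lies in $\mathcal{I}_x^{L}(V)$ since each $w_\alpha\in\mathcal{J}(V)\subseteq\mathcal{I}_x(V)$; and a smooth function vanishing to order $m+1$ at $x$ lies in $\mathcal{I}_x^{m+1}(V)$ because it factors as a product of $m+1$ smooth functions each vanishing at $x$ (this is exactly the classical statement, applied inside $\mathcal{C}^\infty_{\R^{p_0}}(V)\hookrightarrow\mathcal{O}_U(V)$). So for a target total degree $k$, I would keep from each term only what is needed so that the total order (number of $w$-factors plus Taylor order in the $t$'s) is at most $k$, and discard the rest into $\mathcal{I}_x^{k+1}(V)$.

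Concretely, I would define
\[
P_{k,x} = \sum_{L=0}^{k}\ \sum_{\alpha_1\leqslant\ldots\leqslant\alpha_L} Q^{(k-L)}_{\alpha_1\ldots\alpha_L}\big((t_i - t_i(x))_i\big)\,w_{\alpha_1}\ldots w_{\alpha_L},
\]
where $Q^{(k-L)}_{\alpha_1\ldots\alpha_L}$ is the Taylor polynomial of $f_{\alpha_1\ldots\alpha_L}$ at $x$ up to order $k-L$ (with the $L=0$ term being the Taylor polynomial of $f_0$ up to order $k$). By construction $P_{k,x}$ is a polynomial of degree $\leqslant k$ in the variables $(t_i - t_i(x))_i$ and $(w_\alpha)_\alpha$. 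Then $f - P_{k,x}$ splits as: (i) the tails $f_{\alpha_1\ldots\alpha_L} - Q^{(k-L)}_{\alpha_1\ldots\alpha_L}$ times $w_{\alpha_1}\ldots w_{\alpha_L}$ for $L\leqslant k$, each lying in $\mathcal{I}_x^{k-L+1}(V)\cdot\mathcal{I}_x^{L}(V)\subseteq\mathcal{I}_x^{k+1}(V)$; plus (ii) all terms with $L\geqslant k+1$, which lie in $\mathcal{I}_x^{L}(V)\subseteq\mathcal{I}_x^{k+1}(V)$ since $\mathcal{I}_x^{r}(V)\subseteq\mathcal{I}_x^{r'}(V)$ for $r\geqslant r'$. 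Hence $f - P_{k,x}\in\mathcal{I}_x^{k+1}(V)$.

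**Main obstacle.** The delicate points are not the bookkeeping but two convergence/well-definedness issues. First, part (ii) involves an \emph{infinite} sum of elements of $\mathcal{I}_x^{k+1}(V)$, so I must check that $\mathcal{I}_x^{k+1}(V)$ is closed under the formal power series sums that make sense in $\overline{\mathbf{S}\mathcal{W}}$ — i.e. that the relevant infinite sum of sections, each a product of $k+1$ factors in $\mathcal{I}_x(V)$, is again of that form; this should follow from reorganising the formal series and grouping the $w$-factors, but it is the step that genuinely uses the completion $\overline{\mathbf{S}\mathcal{W}}$ rather than $\mathbf{S}\mathcal{W}$, and deserves care. Second, one should note that for fixed total degree $L$ only finitely many index tuples $\alpha_1\leqslant\ldots\leqslant\alpha_L$ occur (by finite type of $\mathcal{W}$ in each degree together with the degree constraint imposed by homogeneity of $w_{\alpha_1}\ldots w_{\alpha_L}$ — actually $L$ ranges over the $w$'s of all degrees, so here one uses that the sum in \eqref{EqDecompo} over tuples of a given length $L$ is well-controlled in the formal sense), so that the finite part $P_{k,x}$ is an honest polynomial. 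Once these are settled, the classical Hadamard lemma for $\mathcal{C}^\infty_{\R^{p_0}}$ does all the analytic work and the rest is algebra in $\overline{\mathbf{S}\mathcal{W}}$.
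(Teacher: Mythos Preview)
Your argument is essentially identical to the paper's: the paper defines exactly the same $P_{k,x}$ (writing $T^{k-K}_x(f_{\alpha_1\ldots\alpha_K};t)$ for your $Q^{(k-L)}_{\alpha_1\ldots\alpha_L}$) and splits $f-P_{k,x}$ into the same two pieces, invoking $w_\alpha\in\mathcal{I}_x(V)$ and the classical Taylor remainder estimate $S_x^r(g;t)\in\mathcal{I}_x^{r+1}(V)$. The two subtleties you flag in your ``Main obstacle'' (closure of $\mathcal{I}_x^{k+1}(V)$ under the relevant infinite formal sums, and finiteness of $P_{k,x}$ when $\mathcal{W}$ has infinitely many nonzero graded pieces) are not addressed in the paper's proof either, which proceeds at the same level of rigor; so your proposal matches the paper both in strategy and in what it leaves implicit.
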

\begin{proof}
Recall that, if $g\in\mathcal{C}_U^\infty(V)$ is a smooth function in $t=(t_1,\ldots,t_{p_0})$, one can take its Taylor series of order 
$r\in\N$ at $x$, written as $T_{x}^r(g;t)$. It is a polynomial of order $r$ that satisfies $g(t)=T_{x}^r(g;t)+S_{x}^r(g;t)$ for 
$S_{x}^r(g;t)$ a sum of elements of the form $h(t) Q_x^r(t)$ with $h$ smooth and $Q_x^r(t)$ a homogeneous polynomial of order $r+1$ 
in the variables $(t_i-t_i(x))_i$.

 From Equation \eqref{EqDecompo}, we can write  $f\in \mathcal{O}_U(V)$ as 
\begin{equation*}
 f=f_0 + \sum_{K=1}^\infty \sum_{\alpha_1\leqslant \ldots \leqslant \alpha_K} 
f_{\alpha_1\ldots \alpha_K} w_{\alpha_1}\ldots w_{\alpha_K}.
\end{equation*}

Since $f_0\in \mathcal{C}^\infty(V)$ and $f_{\alpha_1\ldots \alpha_K}\in \mathcal{C}^\infty(V)$ for all indices, 
we can develop them in Taylor series at $x$ in the variables $t=(t_1,\ldots,t_{p_0})$. 
If, for all $k\in\N$, we define $P_{k,x}$ as  
\begin{equation*}
 P_{k,x}(t)= T^k_{x}(f_0;t) + \sum_{K=1}^k \sum_{\alpha_1\leqslant \ldots \leqslant \alpha_K} 
T^{k-K}_x(f_{\alpha_1\ldots \alpha_K};t) w_{\alpha_1}\ldots w_{\alpha_K},
\end{equation*}
then $P_{k,x}$ is a polynomial of order $k$. In particular, we obtain that 
\begin{eqnarray*}
 f-P_{k,x}(t)&=& S^k_{x}(f_0;t) + \sum_{K=1}^k \sum_{\alpha_1\leqslant \ldots \leqslant \alpha_K} 
S^{k-K}_x(f_{\alpha_1\ldots \alpha_K};t) w_{\alpha_1}\ldots w_{\alpha_K} \\
&&+ \sum_{K= k+1}^\infty \sum_{\alpha_1\leqslant \ldots \leqslant \alpha_K} 
f_{\alpha_1\ldots \alpha_K}(t) \, w_{\alpha_1}\ldots w_{\alpha_K} \,.
\end{eqnarray*}
As $w_\alpha\in \mathcal{I}_x(V)$ for all $\alpha$ and $S_x^r(g;t)\in\mathcal{I}_x^{r+1}(V)$ for any function $g$, we deduce that 
$f - P_{k,x} \in \mathcal{I}_x^{k+1}(V)$. 
\end{proof}

\begin{prop}
\label{CorHadamard}
Let $U^{(p_j)}=(U,\mathcal{O}_U)$ be a $\Z$-graded domain of dimension $(p_j)_{j\in\Z}$ and $V\subseteq U$ be an open subset. Then 
\begin{equation*}
 \bigcap_{k\geqslant1} \bigcap_{x\in V} \mathcal{I}_x^k(V) \, = \, \{0\}\,.
\end{equation*}
\end{prop}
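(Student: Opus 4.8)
The plan is to exploit the unique decomposition \eqref{EqDecompo} of a section in a global coordinate system $(t_i,w_\alpha)_{i,\alpha}$ of $U^{(p_j)}$. Writing a given $f$ in $\bigcap_{k\geqslant 1}\bigcap_{x\in V}\mathcal{I}_x^k(V)$ as $f=f_0+\sum_{K\geqslant 1}\sum_{\alpha_1\leqslant\ldots\leqslant\alpha_K}f_{\alpha_1\ldots\alpha_K}\,w_{\alpha_1}\ldots w_{\alpha_K}$ with smooth coefficients, it suffices to prove that each coefficient vanishes identically on $V$; and since these are ordinary smooth functions, it is enough to show that every coefficient vanishes at an arbitrary point $x\in V$. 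The key point is that the coefficient attached to a monomial of length $K$ will be controlled by the single membership $f\in\mathcal{I}_x^{K+1}(V)$ (the body $f_0$ being controlled by $f\in\mathcal{I}_x(V)$, that is $f(x)=0$).

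So I would fix $x\in V$ and $K\geqslant 1$ and use $f\in\mathcal{I}_x^{K+1}(V)$ to write $f$ as a finite sum of products $h_1\cdots h_{K+1}$ with each $h_i\in\mathcal{O}_U(V)$ satisfying $h_i(x)=0$. For one such product, I would decompose every factor as $h_i=\pi_1(h_i)+r_i$, where $\pi_1(h_i)\in\mathcal{C}^\infty_U(V)$ is the degree-zero part (so $\pi_1(h_i)(x)=h_i(x)=0$) and $r_i:=h_i-\pi_1(h_i)\in\ker\pi_1=\mathcal{J}(V)$. Expanding over subsets, $h_1\cdots h_{K+1}=\sum_{S\subseteq\{1,\ldots,K+1\}}\bigl(\prod_{i\in S}r_i\bigr)\bigl(\prod_{i\notin S}\pi_1(h_i)\bigr)$, and I would extract the coefficient of $w_{\alpha_1}\ldots w_{\alpha_K}$ in each summand.

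The main observation is a length count. By \eqref{EqDecompoSW}, each $r_i\in\mathcal{J}(V)$ is a series in the monomials $w_{\beta_1}\ldots w_{\beta_L}$ with $L\geqslant 1$, and re-sorting a product of such monomials into the ordered basis either adds their lengths or yields $0$ (the latter exactly when an odd-degree generator is repeated); hence $\prod_{i\in S}r_i$ only involves monomials of length $\geqslant|S|$. Consequently a subset with $|S|>K$ contributes nothing to the coefficient of $w_{\alpha_1}\ldots w_{\alpha_K}$, while if $|S|\leqslant K$ the set $\{1,\ldots,K+1\}\setminus S$ is nonempty, so $\prod_{i\notin S}\pi_1(h_i)$ is a nonempty product of smooth functions vanishing at $x$; being of degree zero, it multiplies each coefficient of $\prod_{i\in S}r_i$ by this same function, which is $0$ at $x$. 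Summing over $S$, and then over the finitely many products, gives $f_{\alpha_1\ldots\alpha_K}(x)=0$. Since $x\in V$ and the multi-index were arbitrary (and the case $K=0$ is just $f_0(x)=f(x)=0$), all coefficients of $f$ vanish identically, so $f=0$.

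I do not expect a serious obstacle here: the computation is essentially that of the proof of Hadamard's Lemma~\ref{LemHadamardDomGrad}, refined so as to read off the values (the order-zero Taylor data) of the coefficients. The one point needing care is the bookkeeping with the length of $w$-monomials under the supercommutative product — specifically, that the relation $w_\alpha w_\beta=(-1)^{|\alpha||\beta|}w_\beta w_\alpha$, and the resulting $w_\alpha^2=0$ for $|\alpha|$ odd, can only kill monomials, never shorten them — together with keeping the degree-zero factors $\prod_{i\notin S}\pi_1(h_i)$ separated so that multiplying by them does not transfer terms between distinct monomials. It is worth noting that the argument uses only linearity in the product together with this length count, so it is insensitive to whether $\mathcal{I}_x^{k}(V)$ is understood as the set of $k$-fold products of elements of $\mathcal{I}_x(V)$ or as the $k$-th power of that ideal.
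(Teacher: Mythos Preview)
Your argument is correct and follows essentially the same route as the paper's proof: both reduce to showing that for $f\in\bigcap_{k,x}\mathcal{I}_x^k(V)$ each smooth coefficient $f_{\alpha_1\ldots\alpha_K}$ vanishes at every $x\in V$, via a length count on $w$-monomials. The paper is terser---it simply asserts that $h\in\mathcal{I}_x^{k+1}(V)$ forces $h_{\alpha_1\ldots\alpha_K}\in\mathcal{I}_x^{k+1-K}(V)$ for $K\leqslant k$ and then lets $x$ vary---whereas you spell out the underlying mechanism (splitting each factor as $\pi_1(h_i)+r_i$ and observing that the supercommutative relations never shorten a monomial); but this is exactly the computation behind the paper's asserted implication, not a genuinely different approach.
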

\begin{proof}
We show that, if  $f,g\in\mathcal{O}_U(V)$ are two sections such that for all $k\in\N$, $f-g\in \mathcal{I}_x^{k+1}(V)$ 
for all $x\in V$, then $f=g$.

Consider a global coordinate system $(t_i,w_\alpha)_{i,\alpha}$ on $U^{(p_j)}$. 
Set $h=f-g$ and decompose this section as in Equation \eqref{EqDecompo} :
\begin{equation*}
 h=h_0+ \sum_{K=1}^\infty \sum_{\alpha_1\leqslant \ldots \leqslant \alpha_K} 
h_{\alpha_1\ldots \alpha_K} w_{\alpha_1}\ldots w_{\alpha_K}.
\end{equation*}
Fix $k\in \N$. For all $K\leqslant k$, the condition $h\in I_x^{k+1}(V)$ implies that 
$h_{\alpha_1\ldots \alpha_K} \in I_x^{k+1-K}(V)$. 
Since this is true for all $x\in V$, we have that $h_0=0$ and $h_{\alpha_1\ldots \alpha_K}=0$ on $V$ 
for all indices with $K\leqslant k$.
As $k$ is arbitrary, we conclude that $h=0$ on $V$.
\end{proof}

\begin{prop}
\label{PropCaracTige}
Let $U^{(p_j)}=(U,\mathcal{O}_U)$ be a $\Z$-graded domain of dimension $(p_j)_{j\in\Z}$ with global coordinate system 
$(t_i,w_\alpha)_{i,\alpha}$. Consider an open subset $V\subseteq U$ and a point $x\in V$. 
Then $\mathcal{I}_x(V)=\langle\{t_i-t_i(x),w_\alpha\}_{_{i,\alpha}}\rangle$.
\end{prop}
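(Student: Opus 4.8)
The plan is to prove the two inclusions separately, the easy one first. Write $\mathfrak{m}_x=\langle\{t_i-t_i(x),w_\alpha\}_{i,\alpha}\rangle$ for the ideal on the right-hand side. The inclusion $\mathfrak{m}_x\subseteq\mathcal{I}_x(V)$ is immediate: each generator $t_i-t_i(x)$ has value $0$ at $x$ by construction of $\pi_1$, and each $w_\alpha$ lies in $\mathcal{J}(V)\subseteq\mathcal{I}_x(V)$; since $\mathcal{I}_x(V)$ is an ideal and contains all the generators, it contains $\mathfrak{m}_x$.

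For the reverse inclusion $\mathcal{I}_x(V)\subseteq\mathfrak{m}_x$, I would take $f\in\mathcal{I}_x(V)$ and apply Hadamard's lemma (Lemma \ref{LemHadamardDomGrad}) at order $k=0$: there is a constant polynomial $P_{0,x}$ (a real number, the value $f_0(x)$) with $f-P_{0,x}\in\mathcal{I}_x^1(V)=\mathcal{I}_x(V)$. Since $f\in\mathcal{I}_x(V)$ as well, $P_{0,x}=f(x)=0$, so in fact $f\in\mathcal{I}_x(V)$ already says $f_0(x)=0$. Better: apply the lemma at order $k=1$, giving a degree-one polynomial
\begin{equation*}
 P_{1,x}(t)=f_0(x)+\sum_i \frac{\partial f_0}{\partial t_i}(x)\,(t_i-t_i(x))+\sum_{\alpha} f_\alpha(x)\,w_\alpha
\end{equation*}
with $f-P_{1,x}\in\mathcal{I}_x^2(V)\subseteq\mathfrak{m}_x^2\subseteq\mathfrak{m}_x$ — here one uses that a product of two elements of $\mathcal{I}_x(V)$ lies in $\mathfrak{m}_x$, which itself follows from the $k=0$ case applied to each factor, i.e. $\mathcal{I}_x(V)=\mathfrak{m}_x+$ (nothing new), so it is cleaner to first establish $\mathcal{I}_x(V)\subseteq\mathfrak{m}_x$ directly. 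Concretely: since $f(x)=0$, the constant term $f_0(x)$ vanishes, so $P_{1,x}$ is a linear combination of the generators $t_i-t_i(x)$ and $w_\alpha$, hence $P_{1,x}\in\mathfrak{m}_x$; and $f-P_{1,x}\in\mathcal{I}_x^2(V)$, which is a sum of products $f_1 f_2$ with $f_1,f_2\in\mathcal{I}_x(V)$. Thus it remains only to see that each such $f_1\in\mathcal{I}_x(V)$ itself lies in $\mathfrak{m}_x$, which is a strictly weaker-looking version of what we want — so I would instead argue more economically as follows.

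The slick route: apply Hadamard at order $k=1$ to get $f=P_{1,x}+R$ with $R\in\mathcal{I}_x^2(V)$, and note $P_{1,x}\in\mathfrak{m}_x$ using $f(x)=0$. For $R$, expand it via Equation \eqref{EqDecompo}: the constant term $R_0$ of $R$ satisfies $R_0(x)=0$ and moreover, because $R\in\mathcal{I}_x^2(V)$, one checks that $R_0$ has vanishing first-order Taylor coefficients at $x$ too, so by the classical (smooth) Hadamard lemma $R_0=\sum_i (t_i-t_i(x))\,g_i$ with $g_i\in\mathcal{C}^\infty_U(V)$; the remaining terms of $R$ are all divisible by some $w_\alpha$ by inspection of the decomposition. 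Hence $R\in\mathfrak{m}_x$, and therefore $f\in\mathfrak{m}_x$. Combining both inclusions gives $\mathcal{I}_x(V)=\mathfrak{m}_x$.

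The main obstacle is purely bookkeeping: making precise that membership in $\mathcal{I}_x^2(V)$ forces the degree-zero part of a section to have vanishing value \emph{and} vanishing first-order jet at $x$, and that every positive-degree term in the decomposition \eqref{EqDecompo} is manifestly in the ideal generated by the $w_\alpha$. Once the smooth Hadamard lemma is invoked for the degree-zero component, everything else is immediate from the explicit form \eqref{EqDecompo}; no genuine difficulty arises, only care with the indices $\alpha_1\leqslant\cdots\leqslant\alpha_K$ and the fact that the sum over $K\geqslant 1$ is a convergent formal power series each of whose terms lies in $\langle w_\alpha\rangle\subseteq\mathfrak{m}_x$.
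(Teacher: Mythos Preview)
Your argument is correct, but it takes a detour that the paper avoids entirely. The paper's proof never invokes the graded Hadamard lemma (Lemma~\ref{LemHadamardDomGrad}): it simply decomposes $f$ via \eqref{EqDecompo}, observes that every term with $K\geqslant 1$ already lies in $\langle w_\alpha\rangle\subseteq A$, and then applies the classical smooth Hadamard lemma to $f_0$ alone (since $f(x)=f_0(x)=0$ gives $f_0=\sum_i h_i\,(t_i-t_i(x))$). Your ``slick route'' ends up doing exactly this --- but to the remainder $R$ rather than to $f$ itself, after an unnecessary preliminary splitting $f=P_{1,x}+R$. The observation that $R_0$ has vanishing first-order jet is true but irrelevant: classical Hadamard only needs $R_0(x)=0$. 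If you drop the graded-Hadamard step and run your last paragraph on $f$ directly, you recover the paper's proof verbatim.

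One small terminological slip: in your final paragraph you speak of ``every positive-degree term'' in \eqref{EqDecompo}, but the $w_\alpha$ may have negative degree; what you mean is ``every term with $K\geqslant 1$'', i.e.\ every term carrying at least one $w_\alpha$ factor.
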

\begin{proof}
We write $A=\langle \{t_i-t_i(x),w_\alpha\}_{_{i,\alpha}}\rangle$ for the proof. Consider a section $f\in \mathcal{I}_x(V)$. 
Decomposing $f$ as in Equation \eqref{EqDecompo}, we have  
 \begin{equation*}
 f=f_0+ \sum_{K\geqslant 1} \sum_{\alpha_1\leqslant \ldots \leqslant \alpha_K} 
f_{\alpha_1\ldots \alpha_K} w_{\alpha_1}\ldots w_{\alpha_K}.
 \end{equation*} 
From the definition of $A$, we only need to show that $f_0 \in A$. By assumption, $f\in \mathcal{I}_x(V)$, so that its
value at $x$ is zero. Therefore, considering the Taylor series of $f_0$ of order $1$ gives that 
$f_0(t)=\sum_i h_i(t) (t_i-t_i(x))$ for some 
smooth functions $h_i$ with $i\in\{1,\ldots,p_0\}$. Hence $f_0\in A$ since $A$ contains the sections $(t_i-t_i(x))_i$. 
The inclusion $A\subseteq \mathcal{I}_x(V)$ is direct by definition of $A$. 
\end{proof}

\begin{prop}
\label{PropEvaluationValeur}
Consider an open subset $V\subseteq U$ and a point $x\in V$. Then, $\mathcal{O}_{U}(V)=\R\oplus \mathcal{I}_x(V)$. 
Moreover, the projection on constant sections $\ev_x:\mathcal{O}_{U}(V)\to \R$
calculates the value of every section.
\end{prop}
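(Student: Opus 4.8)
The plan is to show the direct sum decomposition explicitly and then identify the evaluation map with the projection onto the first summand. First I would observe that the constant sections form a copy of $\R$ inside $\mathcal{O}_U(V)$ via the unital embedding $\R\to\mathcal{C}^\infty_U(V)\to\mathcal{O}_U(V)$, and that the only constant section lying in $\mathcal{I}_x(V)$ is $0$, since a nonzero constant $c$ has value $c\neq0$. This gives $\R\cap\mathcal{I}_x(V)=\{0\}$.

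Next I would establish that $\mathcal{O}_U(V)=\R+\mathcal{I}_x(V)$. Given any section $f\in\mathcal{O}_U(V)$, set $c=f(x)\in\R$, viewed as a constant section. Then $(f-c)(x)=f(x)-c=0$ because the value map $f\mapsto f(x)=\pi_1(f)(x)$ is $\R$-linear (it is the composition of the ring morphism $\pi_1$ with evaluation of a smooth function at $x$), so $f-c\in\mathcal{I}_x(V)$, and hence $f=c+(f-c)$ with $c\in\R$ and $f-c\in\mathcal{I}_x(V)$. Combined with the previous step, this yields the internal direct sum $\mathcal{O}_U(V)=\R\oplus\mathcal{I}_x(V)$.

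Finally I would identify $\ev_x$ with this decomposition: by construction of the splitting, the projection onto the $\R$-summand sends $f=c+(f-c)$ to $c=f(x)$, which is exactly the value of $f$ at $x$ as in the preceding definition. So the projection on constant sections computes the value of every section, as claimed.

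I do not expect a serious obstacle here; the only point requiring a little care is the linearity of the value map $f\mapsto f(x)$, but this is immediate from the fact that $\pi_1$ is a morphism of graded rings restricting to the identity on $\mathcal{C}^\infty_U(V)$ (as noted just before the definition of the value), together with $\R$-linearity of evaluation of smooth functions at a point. One should also recall that $\mathcal{J}(V)\subseteq\mathcal{I}_x(V)$, already remarked above, to be sure the non-smooth part of $f$ contributes nothing to the value.
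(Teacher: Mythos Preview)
Your proof is correct and follows essentially the same approach as the paper: both write $f=f(x)+(f-f(x))$ and observe that $f-f(x)\in\mathcal{I}_x(V)$. You are in fact slightly more explicit than the paper in verifying $\R\cap\mathcal{I}_x(V)=\{0\}$ and in justifying the linearity of the value map, but the argument is the same.
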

\begin{proof}
Remark that a section $f\in\mathcal{O}_U(V)$ can be written as $f=(f-f(x))+f(x)$, where $f(x)=f(x)\cdot 1_V$ is a multiple of the unit 
section $1_V\in\mathcal{O}_U(V)$. Besides, one can see that the section $f-f(x)$ has value $0$ at $x$. 
Hence $f$ can be written as a sum of a constant section and an element of $\mathcal{I}_x(V)$ and we obtain that  
$\mathcal{O}_{U}(V)\subseteq \R\oplus \mathcal{I}_x(V)$. 
Under these notations, we have $\ev_x(f)=f(x)\cdot 1_V$. The second statement follows directly. 
\end{proof}

At a point $x$ of a graded domain $U^{(p_j)}$, we write the elements of the stalk, called \emph{germs}, as $[f]_x\in\mathcal{O}_{U,x}$. 
The evaluation is defined on germs in the same manner as it is done for sections. 
We denote the ideal generated by all germs with nonzero degree by $\mathcal{J}_x$, 
and the ideal of germs with null value by $\mathcal{I}_x$.  These two ideals can be obtained by inducing 
the ideals $\mathcal{J}(V)$ and $\mathcal{I}_x(V)$ on the stalk.

\begin{rem}
\label{RemPropStalk}
 The statements in Lemma \ref{LemHadamardDomGrad} as well as Propositions \ref{CorHadamard}, \ref{PropCaracTige} and 
\ref{PropEvaluationValeur} can be reformulated for germs. In particular, we get that $\mathcal{I}_x$ is 
a homogeneous ideal such that $\mathcal{O}_{U,x}=\R\oplus\mathcal{I}_x$. 
\end{rem}

\begin{lem}
\label{LemMaximality}
$\mathcal{I}_x$ is the unique maximal homogeneous ideal of $\mathcal{O}_{U,x}$. 
\end{lem}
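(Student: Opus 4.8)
The plan is to show two things: first that $\mathcal{I}_x$ is a maximal homogeneous ideal, and second that it is the only one. For the first part, I would use the decomposition $\mathcal{O}_{U,x}=\R\oplus\mathcal{I}_x$ from Remark \ref{RemPropStalk}. This immediately gives a ring isomorphism $\mathcal{O}_{U,x}/\mathcal{I}_x\simeq\R$, which is a field, so $\mathcal{I}_x$ is a maximal ideal in the ungraded sense, hence \emph{a fortiori} maximal among homogeneous ideals. One should also check $\mathcal{I}_x$ is homogeneous, but this is recorded in Remark \ref{RemPropStalk} as well (it follows from Proposition \ref{PropCaracTige}, since $\mathcal{I}_x$ is generated by the homogeneous germs $t_i-t_i(x)$ and $w_\alpha$).

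For uniqueness, the key point is to show that any homogeneous ideal $\mathfrak{m}\subsetneq\mathcal{O}_{U,x}$ is contained in $\mathcal{I}_x$; then a maximal homogeneous ideal, being proper, lies in $\mathcal{I}_x$ and by its own maximality must equal $\mathcal{I}_x$. So suppose $[f]_x\in\mathfrak{m}$ but $[f]_x\notin\mathcal{I}_x$, i.e. $f(x)=c\neq 0$. Using $\mathcal{O}_{U,x}=\R\oplus\mathcal{I}_x$, write $[f]_x=c\cdot 1+[g]_x$ with $[g]_x\in\mathcal{I}_x$. The plan is to show $[f]_x$ is invertible in $\mathcal{O}_{U,x}$, which forces $\mathfrak{m}=\mathcal{O}_{U,x}$, a contradiction. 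To invert $[f]_x$, note that its value $f(x)=c\neq 0$ means the smooth part $f_0$ from the decomposition \eqref{EqDecompo} is nonzero at $x$, hence nonzero on some neighbourhood, so $1/f_0$ defines a germ of smooth function. Then writing $[f]_x = [f_0]_x(1 + [f_0^{-1}(f-f_0)]_x)$ where $[f_0^{-1}(f-f_0)]_x\in\mathcal{J}_x\subseteq\mathcal{I}_x$, I invert the second factor by the geometric series $\sum_{k\geqslant 0}(-1)^k [f_0^{-1}(f-f_0)]_x^{\,k}$.

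The main obstacle is justifying convergence of this geometric series, i.e. that it really defines an element of the stalk. Here is where the completion $\overline{\mathbf{S}\mathcal{W}}$ to formal power series is essential. Writing $n:=[f_0^{-1}(f-f_0)]_x$, which lies in $\mathcal{J}_x$, one checks that $n^k$ only contributes terms of order $\geqslant k$ in the generators $(w_\alpha)_\alpha$ (since each factor carries at least one $w_\alpha$), so for any fixed multi-index in \eqref{EqDecompoSW}-style decomposition only finitely many $n^k$ contribute, and the sum is well-defined as a formal power series with smooth-function coefficients; hence it is a genuine element of $\mathcal{O}_{U,x}$. Multiplying, $(1+n)\sum_{k\geqslant 0}(-1)^k n^k = 1$, so $1+n$ is invertible, and therefore so is $[f]_x=[f_0]_x(1+n)$, since $[f_0]_x$ is invertible with inverse $[1/f_0]_x$. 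This contradiction completes the argument, and I would close by remarking that the same computation shows every germ not in $\mathcal{I}_x$ is a unit, so $\mathcal{O}_{U,x}$ is local with maximal ideal exactly $\mathcal{I}_x$.
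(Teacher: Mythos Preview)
Your proof is correct and follows the same overall strategy as the paper: establish maximality from the splitting $\mathcal{O}_{U,x}=\R\oplus\mathcal{I}_x$, then show that any germ outside $\mathcal{I}_x$ is a unit by a geometric-series inversion, forcing any proper (homogeneous) ideal to lie in $\mathcal{I}_x$.

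The one genuine difference is the decomposition you use to build the inverse. The paper takes $[f]_x$ homogeneous, observes it must have degree zero (since $\mathcal{J}_x\subseteq\mathcal{I}_x$), writes $[f]_x=a+[\tilde f]_x$ with $a=f(x)\in\R^\times$ and $[\tilde f]_x\in\mathcal{I}_x$, and sets $F=a^{-1}\sum_{k\geqslant 0}(-a^{-1}\tilde f)^k$. Here $\tilde f$ still carries a nontrivial smooth part (vanishing only at $x$), so each coefficient in the $w$-expansion of $F$ is an honest infinite series of smooth functions; its convergence on a small enough neighbourhood of $x$ is left implicit. Your factorisation $[f]_x=[f_0]_x(1+n)$ with $n\in\ker\pi_1$ separates the analytic step (inverting the smooth germ $f_0$) from the formal one (inverting $1+n$), so that the geometric series is purely formal: for each fixed $w$-monomial only finitely many $n^k$ contribute. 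This is a cleaner justification of exactly the point the paper is making about why the completion $\overline{\mathbf{S}\mathcal{W}}$ is needed. As a bonus, since you never use homogeneity of $[f]_x$, your argument actually shows that every germ outside $\mathcal{I}_x$ is a unit, i.e.\ that $\mathcal{O}_{U,x}$ is local in the ungraded sense, which is slightly stronger than the statement of the lemma.
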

\begin{proof}
The decomposition $\mathcal{O}_{U,x}=\R\oplus\mathcal{I}_x$ ensures that $\mathcal{I}_x$ is a maximal homogeneous ideal.

Assume that $\mathfrak{m}$ is another maximal homogeneous ideal of $\mathcal{O}_{U,x}$. Then, there exists 
an homogeneous element $[f]_x$ which is in $\mathfrak{m}$ but not in $\mathcal{I}_x$. Since $\mathcal{I}_x$ contains the ideal 
$\mathcal{J}_x$ generated by all germs of nonzero degree, $[f]_x$ is a germ of degree zero. 

By maximality of $\mathcal{I}_x$, the ideal $\mathcal{O}_{U,x} [f]_x + \mathcal{I}_x$ is $\mathcal{O}_{U,x}$. 
Therefore, there exist $[h]_x\in\mathcal{O}_{U,x}$ and $[g]_x\in \mathcal{I}_x$, both of degree zero, such that $1=[h]_x[f]_x+[g]_x$. 
If we take the value of the germs at $x$, the equality gives that $1=[h]_x(x)[f]_x(x)$ since $[g]_x(x)=0$. 
Thus, setting $a=[f]_x(x)$, we have $a\in \R\setminus\{0\}$. According to the decomposition $\mathcal{O}_{U,x}=\R\oplus\mathcal{I}_x$, 
we have that $[f]_x=a+[\tilde{f}]_x$ with some $[\tilde{f}]_x\in\mathcal{I}_x$ of degree zero. 

Take a section $\tilde{f}$ defined in a neighbourhood of $x$ which represents $[\tilde{f}]_x$ and set 
$F=a^{-1}\sum\limits_{k=0}^\infty (-a^{-1}\tilde{f})^k$. On the stalk $\mathcal{O}_{U,x}$, we find that $[F]_x[f]_x=1$. 
Since $\mathfrak{m}$ is an ideal, this equality implies that $1\in\mathfrak{m}$. Thus, we have  $\mathfrak{m}=\mathcal{O}_{U,x}$.
 Hence $\mathcal{I}_x$ is the unique maximal homogeneous ideal of $\mathcal{O}_{U,x}$. 
\end{proof}
\begin{thm}
A $\Z$-graded domain is a $\Z$-graded locally ringed space.
\end{thm}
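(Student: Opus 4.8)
The plan is to unwind the definitions: a $\Z$-graded locally ringed space is a $\Z$-graded ringed space all of whose stalks are local graded rings, and a $\Z$-graded domain is manifestly the former, so the only real content is the statement about stalks. First I would check that $\mathcal{O}_U$ is a sheaf of associative unital supercommutative $\Z$-graded rings. By Definition \ref{DefDomGrad} its sections over $V$ are $\mathcal{C}^\infty_{\R^{p_0}}(V)\otimes\overline{\mathbf{S}\mathcal{W}}$, with restriction maps those of $\mathcal{C}^\infty_{\R^{p_0}}$ tensored with the identity on the fixed $\R$-algebra $\overline{\mathbf{S}\mathcal{W}}$; hence the sheaf axioms reduce to those of $\mathcal{C}^\infty_{\R^{p_0}}$. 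The grading and product are as in the Remark following \eqref{EqDecompoSW}: $\overline{\mathbf{S}\mathcal{W}}$ is associative, unital and supercommutative, being the completion of $\mathbf{S}\mathcal{W}$, which has those properties, and tensoring with the purely even commutative ring $\mathcal{C}^\infty_{\R^{p_0}}(V)$ preserves them. Thus $U^{(p_j)}$ is a $\Z$-graded ringed space.

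The key step is then that each stalk $\mathcal{O}_{U,x}$ is a local graded ring, i.e. admits a unique maximal homogeneous ideal. This is exactly Lemma \ref{LemMaximality}, which identifies $\mathcal{I}_x$ as that ideal; the fact that $\mathcal{O}_{U,x}$ really is a $\Z$-graded ring equipped with a value map and with the ideals $\mathcal{J}_x$ and $\mathcal{I}_x$ obtained by passing to the stalk was already recorded in Remark \ref{RemPropStalk} and the discussion preceding it. Combining this with the first paragraph and the definition of a $\Z$-graded locally ringed space gives the theorem.

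I do not expect a serious obstacle here: all of the substantive work was carried out in Hadamard's Lemma \ref{LemHadamardDomGrad}, the subsequent propositions, and Lemma \ref{LemMaximality}. The only point deserving a sentence of care is the verification that the filtered colimit defining $\mathcal{O}_{U,x}$ inherits the decomposition $\mathcal{O}_{U,x}=\R\oplus\mathcal{I}_x$ and the homogeneity of $\mathcal{I}_x$, which is precisely what Remark \ref{RemPropStalk} asserts; once that is granted, the proof is a two-line assembly of earlier results.
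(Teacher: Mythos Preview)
Your proposal is correct and follows essentially the same route as the paper: establish that $U^{(p_j)}$ is a $\Z$-graded ringed space directly from Definition~\ref{DefDomGrad}, and then invoke Lemma~\ref{LemMaximality} (together with the stalk-level facts recorded in Remark~\ref{RemPropStalk}) for the locality of each $\mathcal{O}_{U,x}$. The only difference is that you spell out the sheaf and ring axioms more explicitly than the paper, which simply cites the definition.
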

\begin{proof}
By Definition \ref{DefDomGrad}, a graded domain is a graded ringed space. 
The locality of the stalks follows from Lemma \ref{LemMaximality}. 
\end{proof}

\begin{rem}
\label{RemLocality}
An alternative definition of graded domain can be given. We can set that the sheaf is 
$\mathcal{O}_U(V)=\mathcal{C}^\infty_U(V)\otimes \mathbf{S}\mathcal{W}$, namely we do not consider formal power series in the 
graded coordinates but only polynomials. 
Then, in general, the ideal $\mathcal{I}_x$ of germs with null value is not the unique maximal homogeneous
ideal of $\mathcal{O}_{U,x}$. For example, the homogeneous ideal   $\langle 1-[w]_x \rangle$ 
(where $w$ is a a non-nilpotent element of degree $0$ which is a product of local coordinates of nonzero degree) 
is contained in a maximal homogeneous ideal different from $\mathcal{I}_x$, as the germ $1-[w]_x$ has value $1$. 
Thus, the stalks of this alternative graded domain are not local. Nevertheless, they are $\pi$-local: 
$\mathcal{I}_x$ is the only maximal homogeneous ideal of $\mathcal{O}_{U,x}=\mathcal{C}^\infty_{U,x}\otimes \mathbf{S}\mathcal{W}$ 
which projects onto the maximal ideal of $\mathcal{C}^\infty_{U,x}$. Though this definition can be chosen, it limits the study 
of $\Z$-graded manifolds since a local expression of the form $f(t+w)$ would not always admit a finite power series in $w$, 
which is an obstacle to introduce differentiability.
\end{rem}

\section{Graded manifold}
\label{SectionGradedManifold}

\subsection{Definition} 

\begin{defn}
\label{DefVarieteGraduee}
Let $|M|$ be a Hausdorff secound-countable topological space and  $\mathcal{O}_{M}$ be a sheaf of  associative unital supercommutative 
$\Z$-graded algebras on $|M|$, such that  $M=(|M|,\mathcal{O}_{M})$ is a $\Z$-graded locally ringed space. 
Moreover, let $(p_j)_{j\in \Z}$ be a sequence of non-negative integers.  
We say that $M$ is a \emph{$\Z$-graded manifold} of dimension $(p_j)_{j\in \Z}$, 
if there exists a local isomorphism of $\Z$-graded locally ringed spaces $\phi$ between $M$ and $\R^{(p_j)}$. 
 We say that an open subset $V\subseteq|M|$ is a \emph{trivialising open set} if the restriction of $\phi$ to $V$, written $\phi_V$, 
is an isomorphism of graded locally ringed spaces between $(V,\restriction{\mathcal{O}_{M}}{V})$ and its image 
$\big(\widetilde{V},\restriction{\mathcal{C}^\infty_{\R^{p_0}}}{\widetilde{V}}\big)$, for $\widetilde{V}=\phi_V(V)$.
\end{defn}
If a sequence $(t_i,w_\alpha)_{i,\alpha}$ is a global coordinate system on the $\Z$-graded domain $\R^{(p_j)}$, its 
pullbacks on trivialising open sets form a local coordinate system on $M$.  By abuse of notation, 
one says that $(t_i,w_\alpha)_{i,\alpha}$ is a \emph{local coordinate system} on $M$.
Morphisms of graded manifolds are defined as morphisms of graded locally ringed spaces.

\begin{rem}
\label{RemLocGradMan}
 Alternatively, one can define a graded manifold as a graded $\pi$-locally ringed space 
(\emph{i.e.} stalks are $\pi$-local graded rings), 
locally isomorphic to the alternative graded domains defined in Remark \ref{RemLocality}.  
Full development of such a definition is left to the interested reader.
\end{rem}

\begin{exmp}
\label{ExVarGradFibreQlcq}
Let $M$ be a smooth manifold of dimension $n$ and write $|M|$ for its underlying topological space. 
If $E\to M$ is a vector bundle of rank $m$ and $k$ is a nonzero integer, 
we define the sheaf $\mathcal{O}_{E[k]}$ on $|M|$ by  
\begin{itemize}
 \item[--] $\mathcal{O}_{E[k]}(V)$ equals $\Gamma(V,\restriction{\bigwedge E^*}{V})$ if $k$ is odd, 
 \item[--] $\mathcal{O}_{E[k]}(V)$ equals $\Gamma(V,\restriction{\overline{\mathbf{S} E^*}}{V})$ if $k$ is even,
\end{itemize}
for all open subsets $V\subseteq |M|$. Here, 
the algebra bundles $\bigwedge E^*$ and $\overline{\mathbf{S}E^*}$ are considered in the usual non-graded sense.
 Then $\mathfrak{G}_{E[k]}=(|M|,\mathcal{O}_{E[k]})$ is a graded manifold of dimension $(p_j)_{j\in \Z}$, 
with $p_0=n$, $p_k=m$ and $p_j=0$ if $j\in \Z\setminus\{0,k\}$. The grading on the sections is defined by $|v|=kl$ if  
 $v\in\Gamma\left(M,\mathbf{S}^l E^*\right)$ or $v\in \Gamma\big(M,\bigwedge ^l E^*\big)$.
\end{exmp}

The graded manifold in Example \ref{ExVarGradFibreQlcq} is usually denoted by $E[k]$. We prefer to use a different notation in 
these notes so that such a graded manifold obtained from a shift is written differently than a lifted graded vector space. 

\begin{rem}
One can define $\N$-graded manifolds as $\Z$-graded manifolds whose dimension is indexed by $\N$. Therefore, 
any homogeneous section has nonnegative degree. This leads to an interesting property: on a $\N$-graded manifold, there does not exist 
a section of degree zero which can be obtained as a product of sections of nonzero degree.  This means that the locality and 
the $\pi$-locality of the stalks are equivalent conditions. Hence, $\N$-graded manifolds do not require the introduction of 
formal series to be studied, unlike $\Z$-graded manifolds (see Remarks \ref{RemLocality} and \ref{RemLocGradMan}). 
Another difference between $\N$- and $\Z$-graded manifolds is that there exists a Batchelor-type theorem on 
$\N$-graded manifolds \cite{BonavolontaPoncin}. It is not known if an analogous result holds for $\Z$-graded manifolds.
\end{rem}

\subsection{Properties} 
\label{SubSectProperties}

Let $M=(|M|,\mathcal{O}_{M})$ be a $\Z$-graded manifold and take a point $x\in |M|$. By definition, the stalk $\mathcal{O}_{M,x}$ is a 
local graded ring which admits a maximal homogeneous ideal $\mathfrak{m}_x$. Using the isomorphism $\phi$ of a trivialising open 
set around $x$ with a graded domain $(U,\mathcal{O}_U)$, we get two canonical algebra isomorphisms :  
$\mathcal{O}_{M,x}/\mathfrak{m}_{x}\simeq \mathcal{O}_{U,\phi(x)}/\mathcal{I}_{\phi(x)}\simeq \R$. We set 
$\ev_x:\mathcal{O}_{M,x}\to \R$ to denote the composition of the projection $\mathcal{O}_{M,x}\to\mathcal{O}_{M,x}/\mathfrak{m}_{x}$ 
with the above isomorphism.

\begin{defn}
\label{DefValeur}
Let $M=(|M|,\mathcal{O}_M)$ be a $\Z$-graded manifold of dimension $(p_j)_{j\in\Z}$, 
$V\subseteq |M|$ an open subset and $x\in V$. Set $\mathfrak{m}_x$ to indicate the maximal ideal of $\mathcal{O}_{M,x}$.
For every section $f\in \mathcal{O}_M(V)$, the \emph{value} (or \emph{evaluation}) of $f$ at $x$ is given by $f(x)=\ev_x([f]_x)$.
\end{defn}

From the definition above, the ideal of sections with null value $\mathcal{I}_x$ at $x$ can be defined on the graded manifold as 
the kernel of $\ev_x$. Moreover, if $V\subseteq |M|$ is a trivialising open set,
Hadamard's Lemma \ref{LemHadamardDomGrad} and Proposition \ref{CorHadamard} hold on $V$.

Although it is not used for defining the value on a graded manifold, the ideal generated by all sections with nonzero degree exists. 
We write $\mathcal{J}(V)$ to denote this ideal on an open subset $V$ of a graded manifold $M$. If $V$ is a trivialising open set, 
for any section $f\in\mathcal{O}_{M}(V)$ we set $\tilde{f}$ to indicate the element $\pi_1(f)\in \mathcal{C}^\infty(V)$ 
(see \ref{SubSectGradedDomain}).
Under this notation, the following partition of unity holds true :
\begin{prop}
\label{PropPartUnite}
Let $M=(|M|,\mathcal{O}_M)$ be a $\Z$-graded manifold of dimension $(p_j)_{j\in\Z}$ and $(V_\beta)_{\beta}$ be an open covering of $|M|$ by 
trivialising open subsets. 
Then there exists a sequence $(g_\beta)_{\beta}\subset \mathcal{O}_M(|M|)$ such that for all $\beta$,
\begin{enumerate}
 \item $g_\beta \in \left(\mathcal{O}_M(|M|)\right)_0$ ,
 \item $\text{\emph{supp}}\left( g_\beta \right)\subset V_\beta$ ,
 \item $\sum g_\beta=1$ and $\tilde{g}_\beta \geqslant 0$.
\end{enumerate}
\end{prop}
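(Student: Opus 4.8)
The plan is to transport the classical smooth partition of unity on $|M|$ up to the structure sheaf by means of the projections $\pi_1$ available on trivialising open sets, and then correct the resulting sections so that they sum exactly to the unit section. First I would invoke the classical result: since $|M|$ is Hausdorff and second countable (hence paracompact), the open cover $(V_\beta)_\beta$ admits a subordinate smooth partition of unity $(h_\beta)_\beta \subset \mathcal{C}^\infty(|M|)$, with $h_\beta \geqslant 0$, $\operatorname{supp}(h_\beta)\subset V_\beta$, the supports locally finite, and $\sum_\beta h_\beta = 1$ on $|M|$. Each $h_\beta$ is supported in the trivialising open set $V_\beta$, so using the embedding $\mathcal{C}^\infty(V_\beta)\to\mathcal{O}_M(V_\beta)$ (the right inverse of $\pi_1$) I can regard $h_\beta$ as a degree-zero section of $\mathcal{O}_M(V_\beta)$; since its smooth support is contained in $V_\beta$, it extends by zero to a global section which I still call $h_\beta \in (\mathcal{O}_M(|M|))_0$. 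This already gives (1) and (2), and $\tilde h_\beta = h_\beta \geqslant 0$; what may fail is (3), because the sum $\sum_\beta h_\beta$, computed in the sheaf $\mathcal{O}_M$, need only satisfy $\pi_1\big(\sum_\beta h_\beta\big)=1$ locally, i.e. $\sum_\beta h_\beta = 1 + j$ where $j$ is a global section lying in $\mathcal{J}$ on every trivialising patch.

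The correction step is where the interesting point lies. Set $s=\sum_\beta h_\beta$; this is a well-defined global section of degree zero because local finiteness makes the sum locally finite, and on each trivialising $V$ we have $\pi_1(s)=\sum_\beta \tilde h_\beta = 1$, so $s - 1$ lies in $\mathcal{J}(V)$, hence $s$ has value $1$ at every point of $|M|$. Consequently $s$ is invertible in $\mathcal{O}_M(|M|)$: locally on a trivialising patch, writing $s = 1 + j$ with $j \in \mathcal{J}(V)$, the series $\sum_{k\geqslant 0}(-j)^k$ converges in $\mathcal{O}_U(V)\cong \mathcal{C}^\infty(V)\otimes\overline{\mathbf{S}\mathcal{W}}$ (this uses precisely the completion: the degree-zero part of $j$ is a formal power series in the graded coordinates with no constant term, so powers of $j$ accumulate in ever-higher symmetric-algebra degree and the sum is a well-defined formal series), giving a local inverse $s^{-1}$; these local inverses agree on overlaps by uniqueness of inverses and glue to a global section $s^{-1}\in(\mathcal{O}_M(|M|))_0$. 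Now define $g_\beta = s^{-1} h_\beta$. Then $g_\beta \in (\mathcal{O}_M(|M|))_0$, giving (1); since $g_\beta$ is a multiple of $h_\beta$ its support is contained in $\operatorname{supp}(h_\beta)\subset V_\beta$, giving (2); and $\sum_\beta g_\beta = s^{-1}\sum_\beta h_\beta = s^{-1}s = 1$, while $\tilde g_\beta = \pi_1(s^{-1}h_\beta) = \pi_1(s^{-1})\,\tilde h_\beta = 1\cdot h_\beta = h_\beta \geqslant 0$ (using that $\pi_1$ is an algebra homomorphism on each trivialising patch and $\pi_1(s^{-1})=\pi_1(s)^{-1}=1$), giving (3).

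The main obstacle to watch is the convergence and gluing of the local inverses of $s$: one must be careful that the geometric series $\sum_{k\geqslant 0}(-j)^k$ genuinely converges termwise-eventually in the completed symmetric algebra, which is exactly the role played by $\overline{\mathbf{S}\mathcal{W}}$ rather than $\mathbf{S}\mathcal{W}$ (this is the phenomenon highlighted in Remark \ref{RemLocality}), and that the smooth coefficients appearing remain smooth. A secondary point requiring a word is that the infinite sum $\sum_\beta h_\beta$ defining $s$ is genuinely a section of the sheaf: local finiteness of the supports means that on a small enough neighbourhood of any point only finitely many $h_\beta$ are nonzero, so the sum is a finite sum locally and the resulting local sections agree on overlaps and glue by the sheaf axioms. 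Everything else is a routine transfer of the classical partition-of-unity argument through the algebra homomorphism $\pi_1$ and the section $\mathcal{C}^\infty \hookrightarrow \mathcal{O}_M$.
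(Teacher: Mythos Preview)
Your proposal is correct and is precisely the standard argument the paper has in mind: the paper gives no details at all, merely stating that the proof is ``exactly the same'' as for supermanifolds and citing \cite{CCFsupersym}, and the argument there is the one you outline (lift a classical partition of unity via the local embeddings $\mathcal{C}^\infty\hookrightarrow\mathcal{O}_M$, observe the sum is $1$ modulo $\mathcal{J}$, and divide by its inverse). You even identify the one genuine difference from the super case---that the geometric series for $s^{-1}$ need not terminate and requires the completion $\overline{\mathbf{S}\mathcal{W}}$---which the paper leaves implicit.
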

The proof of this proposition is exactly the same that the one for the partition of unity on a supermanifold 
\cite{CCFsupersym}. 

Let $M=(|M|,\mathcal{O}_M)$ be a graded manifold. 
We can use Proposition \ref{PropPartUnite} to show that the presheaf  
$\mathcal{O}_M/\mathcal{J}:V\mapsto \mathcal{O}_M(V)/\mathcal{J}(V)$, where $V\subseteq |M|$, is a sheaf. 
Note that $\mathcal{O}_M$ is $\pi$-local since it is local. Therefore the stalks of $\mathcal{O}_M/\mathcal{J}$ are local. 
Hence, $\mathcal{O}_M/\mathcal{J}$ is a sheaf locally isomorphic to  $\mathcal{C}^\infty_{\R^{p_0}}$ and we 
get that $(|M|,\mathcal{O}_M/\mathcal{J})$ is a smooth manifold. Thus, one can assume from the beginning that a graded manifold has an 
underlying structure of smooth manifold. 
This justifies the frequent definition of graded manifold found in the literature, which considers  
graded manifolds as smooth manifolds with an additional \emph{glued} structure.

\begin{rem} \label{RemZeroDeg}
 In recent developments (see \cite{Vogl16,MehtaStiXu}) formal polynomial functions on graded manifolds have been considered with 
respect to a $\Z$-graded vector space that has a non-trivial part of degree zero. 
This means that, in Definition \ref{DefVarieteGraduee}, we could choose $\mathcal{W}$ such that  $\text{dim}(\mathcal{W}_0)\geqslant 1$. 
 In that case, the underlying structure of 
smooth manifold still exists, but one has to obtain it from the morphism $\pi_1$ instead of $\pi$ (see \ref{SubSectGradedDomain}) 
as their images are no longer isomorphic. 
\end{rem}

\subsection{Vector fields}

Let $M=(|M|,\mathcal{O}_M)$ be a $\Z$-graded manifold. 
We say that a \emph{derivation of degree $k$} on an open subset $V\subseteq|M|$ is an application 
$X_V^k:\mathcal{O}_M(V)\to \mathcal{O}_M(V)$ which maps every section of degree $m$ onto a section of degree $m+k$ and which 
satisfies 
\begin{equation}
\label{EqDerivation}
X_V^k(fg)=X_V^k(f)\,g+(-1)^{k |f|}f\,X_V^k(g), \text{ }\forall f,g\in \mathcal{O}_M(V). 
\end{equation} 

\begin{defn}
A \emph{vector field} $X$ on a $\Z$-graded manifold $M=(|M|,\mathcal{O}_M)$ of dimension $(p_j)_{j\in\Z}$ 
 is a $\R$-linear derivation of $\mathcal{O}_M$, 
\emph{i.e.} a family of mappings $X_V:\mathcal{O}_M(V)\to \mathcal{O}_M(V)$ for all open subsets $V\subseteq|M|$, 
compatible with the sheaf 
restriction morphism  and that satisfies $X_V=\sum_{k\in \Z} X_V^k$, where $X_V^k$ is a derivation of degree $k$. 
We say that $X$ is \emph{graded} of degree $k$ if $X_V=X_V^k$ for all $V\subseteq |M|$.

\noindent The \emph{graded tangent bundle} $\Vect_M$ of $M$ is the sheaf whose sections are the vector fields. 
It assigns to every open subset  $V\subseteq|M|$ the graded vector space of derivations on $\restriction{\mathcal{O}_M}{V}$.
\end{defn}

First, we study the graded tangent bundle on a graded domain.  
Assume that $U^{(p_j)}=(U,\mathcal{O}_U)$ is a graded domain of dimension $(p_j)_{j\in\Z}$ with global coordinate system  
$(t_i,w_\alpha)_{i,\alpha}$. We have that $\mathcal{O}_U=\mathcal{C}^\infty_U\otimes\overline{\mathbf{S}\mathcal{W}}$ 
for some graded vector space $\mathcal{W}$ with graded basis $(w_\alpha)_{\alpha}$. 
Consider the basis $\left(\frac{\partial}{\partial t_i}\right)_i$ of the non-graded tangent bundle $\Vect_{U}$ 
and set $\left(\frac{\partial}{\partial w_\alpha}\right)_\alpha$ for the dual graded basis of $(w_\alpha)_\alpha$ in $\mathcal{W}^*$. 
These objects satisfy, for all indices $i,j,\alpha,\beta$, 
\begin{eqnarray}
\quad\frac{\partial}{\partial t_i}\, (t_j)=\delta_{ij}\,,\quad \quad\frac{\partial}{\partial t_i}\, (w_\beta)=0 \,, \label{EqCDVti}\\
\quad\frac{\partial}{\partial w_\alpha}\, (w_\beta)=\delta_{\alpha\beta}\,,\quad \quad\frac{\partial}{\partial w_\alpha}\,(t_j)=0 
\label{EqCDVwalpha}\,.
\end{eqnarray}
Moreover, we can extend them as $\R$-linear derivations on the whole sheaf by Leibniz's rule \eqref{EqDerivation}. 
These are graded vector fields and the degree of $\frac{\partial}{\partial t_i}$ is $0$ while the degree of 
$\frac{\partial}{\partial w_\alpha}$ is $-|w_\alpha|$. 

\begin{lem}
\label{LemDecompoCDV}
Let $U^{(p_j)}=(U,\mathcal{O}_U)$ be a $\Z$-graded domain of dimension $(p_j)_{j\in\Z}$. 
Then its graded tangent bundle $\Vect_{U^{(p_{j})}}$ is a free sheaf of 
$\mathcal{O}_U$-modules such that 
\begin{equation}
\label{EqBaseCDV}
 \Vect_{U^{(p_{j})}} \, \simeq \, \bigg(\mathcal{O}_U \underset{\mathcal{C}^\infty_U}{\otimes} 
\Vect_{U}\bigg) \,\, \oplus \,\, \left(\mathcal{O}_U \underset{\R}{\otimes} \mathcal{W}^*\right) \,.
\end{equation}
If $(t_i,w_\alpha)_{i,\alpha}$ is a global coordinate system on $U^{(p_j)}$, the vector field $X$ admits the following splitting :
\begin{equation*}
X=\sum_{i}\, X_{0,i}\frac{\partial}{\partial t_i}
+\sum_{\alpha}\, X_{1,\alpha}\frac{\partial}{\partial w_\alpha}\,,
\end{equation*}
where $X_{0,i}=X(t_i)\in\mathcal{O}_U(U)$ and $X_{1,\alpha}=X(w_\alpha)\in \mathcal{O}_U(U)$ for all $i,\alpha$.
\end{lem}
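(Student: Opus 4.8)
The plan is to prove the splitting formula by first checking that the claimed decomposition makes sense locally in the global coordinate system, then verifying that the two summands on the right-hand side are free and that an arbitrary derivation is uniquely determined by its values on the coordinates. First I would fix a global coordinate system $(t_i,w_\alpha)_{i,\alpha}$ and observe that $\frac{\partial}{\partial t_i}$ and $\frac{\partial}{\partial w_\alpha}$, extended by $\R$-linearity and the Leibniz rule \eqref{EqDerivation}, are genuine vector fields on $U^{(p_j)}$; this is the content of the lines preceding the lemma, so I may assume it. The $\mathcal{O}_U(U)$-linear combination $Y=\sum_i X(t_i)\frac{\partial}{\partial t_i}+\sum_\alpha X(w_\alpha)\frac{\partial}{\partial w_\alpha}$ is then also a vector field, and by \eqref{EqCDVti}--\eqref{EqCDVwalpha} it satisfies $Y(t_i)=X(t_i)$ and $Y(w_\alpha)=X(w_\alpha)$ for all $i,\alpha$.

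The key step is then to show that a derivation of $\mathcal{O}_U$ is determined by its action on the coordinates, i.e.\ that $X-Y$ vanishes identically. Set $Z=X-Y$; it is a derivation (in general not homogeneous, but a finite or locally finite sum of homogeneous derivations $Z^k$, and it suffices to argue componentwise) with $Z(t_i)=0$ and $Z(w_\alpha)=0$ for all $i,\alpha$. Using the Leibniz rule, $Z$ annihilates every polynomial in the coordinates, in particular every $f\in\mathcal{C}^\infty_U(U)\otimes\mathbf{S}\mathcal{W}$ whose smooth part is a polynomial in $t$; more generally, for a smooth function $g(t)$, Hadamard's Lemma \ref{LemHadamardDomGrad} gives, for each $k$, a polynomial $P_{k,x}$ with $g-P_{k,x}\in\mathcal{I}_x^{k+1}(U)$, and since a degree-$k$ derivation maps $\mathcal{I}_x^{k+1}$ into $\mathcal{I}_x$ (because $Z$ lowers the power of the maximal ideal by at most one step via Leibniz), we get that $Z(g)(x)\in\bigcap_{k}\mathcal{I}_x$... more carefully, that $Z(g)-Z(P_{k,x})=Z(g-P_{k,x})$ lies in $\mathcal{I}_x^{k}(U)$ for all $k$, and $Z(P_{k,x})=0$, so $Z(g)\in\bigcap_{k\geqslant 1}\mathcal{I}_x^k(U)$. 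Since this holds for every $x\in U$ and every section is, by \eqref{EqDecompo}, a formal series whose coefficients are such smooth functions, one applies Proposition \ref{CorHadamard} to conclude $Z=0$. The one subtlety to handle here is continuity: $Z$ must be shown to commute with the formal-series decomposition \eqref{EqDecompo}, which follows because $Z$ preserves degree up to the fixed shift $k$ and hence acts coefficientwise on the finite-degree pieces, so the argument reduces term by term.

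The main obstacle I expect is precisely this interchange of a derivation with the infinite sum in \eqref{EqDecompo} — one must argue that $Z$ does not "see" the whole series at once but acts block by block on the pieces of bounded polynomial degree in the $w_\alpha$, which are finite sums, and then pass to the limit using Proposition \ref{CorHadamard}; the homogeneity (degree-raising by exactly $k$) of $Z^k$ is what makes this legitimate. Once uniqueness ($X=Y$) is established, the decomposition \eqref{EqBaseCDV} follows formally: the map $X\mapsto\big((X(t_i))_i,(X(w_\alpha))_\alpha\big)$ is an $\mathcal{O}_U$-module isomorphism from $\Vect_{U^{(p_j)}}(U)$ onto $\big(\mathcal{O}_U(U)\otimes_{\mathcal{C}^\infty_U(U)}\Vect_U(U)\big)\oplus\big(\mathcal{O}_U(U)\otimes_\R\mathcal{W}^*\big)$, with inverse given by the explicit formula $\sum_i X_{0,i}\frac{\partial}{\partial t_i}+\sum_\alpha X_{1,\alpha}\frac{\partial}{\partial w_\alpha}$; freeness is then immediate since $\big(\frac{\partial}{\partial t_i},\frac{\partial}{\partial w_\alpha}\big)_{i,\alpha}$ is visibly a basis over $\mathcal{O}_U(U)$. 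Finally, since all of this is natural in $V\subseteq U$ and compatible with restriction, the isomorphism holds at the level of sheaves, giving \eqref{EqBaseCDV} and the stated splitting of $X$ with $X_{0,i}=X(t_i)$, $X_{1,\alpha}=X(w_\alpha)$.
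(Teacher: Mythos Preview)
Your proof is correct and follows essentially the same route as the paper: define the candidate $\hat X=\sum_i X(t_i)\,\partial/\partial t_i+\sum_\alpha X(w_\alpha)\,\partial/\partial w_\alpha$, show $D=X-\hat X$ kills all polynomials in the coordinates, then use Hadamard's Lemma together with $D(\mathcal{I}_x^{n+1})\subset\mathcal{I}_x^{n}$ and Proposition~\ref{CorHadamard} to conclude $D=0$; uniqueness and freeness follow as you say.

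The one place where you make your life harder than necessary is the ``continuity'' detour. You apply Hadamard's Lemma only to smooth functions $g(t)$ and then worry about passing $Z$ through the formal series \eqref{EqDecompo}. But Lemma~\ref{LemHadamardDomGrad} is stated for an \emph{arbitrary} section $f\in\mathcal{O}_U(V)$: it already produces a polynomial $P_{k,x}$ in both the $t_i-t_i(x)$ and the $w_\alpha$ with $f-P_{k,x}\in\mathcal{I}_x^{k+1}(V)$. Applying it directly to $f$ gives $D(f)=D(f-P_{k,x})\in\mathcal{I}_x^{k}(V)$ for all $k$ and all $x$, and Proposition~\ref{CorHadamard} finishes without any interchange-of-limit argument. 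This is exactly how the paper proceeds, and it removes the subtlety you flagged.
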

\begin{proof}
 Let $X\in \Vect_{U^{(p_{j})}}$ be a vector field. 
For all open subsets $V\subseteq U$ and every $f\in\mathcal{O}_U(V)$, 
 $f X$ is a derivation on $\mathcal{O}_U(V)$.
Therefore $\Vect_{U^{(p_{j})}}$ is a sheaf of $\mathcal{O}_U$-modules. The fact that it is free and 
Equation \eqref{EqBaseCDV} are consequences of the existence of a graded basis, which is obtained below.

Assume that the global coordinates on $U^{(p_j)}$ are $(t_i,w_\alpha)_{i,\alpha}$ and 
let $X\in \Vect_{U^{(p_{j})}}$ be a vector field. We define 
\begin{equation*}
 \hat{X}=\sum \limits_{i} X(t_i) \frac{\partial}{\partial t_i} + \sum \limits_{\alpha} 
X(w_\alpha) \frac{\partial}{\partial w_\alpha},
\end{equation*}
where $X(z)$ is the section in $\mathcal{O}_{U}(U)$ given by applying $X$ to the section $z$. Since $\hat{X}$ is a derivation, 
$D=X-\hat{X}$ is also one and it vanishes on the global coordinates $(t_i,w_\alpha)_{i,\alpha}$. By Leibniz's rule \eqref{EqDerivation}, 
$D$ is equal to zero on all polynomials generated by the global coordinates.  

Our aim is to show that $D$ is null on all sections. Take an open subset $V\subseteq U$ and a section $f\in\mathcal{O}_U(V)$. From 
Lemma \ref{LemHadamardDomGrad} we have that, for all $n\geqslant 0$, there exists a polynomial 
$P_{n,x}$ of degree $n$ in the global coordinates such that $f-P_{n,x}\in \mathcal{I}_x^{n+1}(V)$. We set $h=f-P_{n,x}$.  

Notice that for any vector field $Y$ we have $Y(\mathcal{I}_x^n(V))\subset\mathcal{I}_x^{n-1}(V)$ for  $n\geqslant1$.
Since $D$ vanishes on every polynomial in $(t_i)_i$ and $(w_\alpha)_\alpha$, we have by linearity that $D(f)=D(h)\in \mathcal{I}_x^n(V)$. 
Since this is true for all $n\geqslant 1$ and any point $x\in V$,  Proposition 
\ref{CorHadamard} gives that $D(f)=0$ on $U$. As $f$ is any section of $\mathcal{O}_U(V)$ and $V$ is any open set, 
we have proved that $D=0$.

It remains to show that the decomposition is unique. Assume that $\tilde{X}$ is another decomposition of $X$ such that 
$\tilde{X}=\sum\limits_{i}f_i\frac{\partial}{\partial t_i}+\sum\limits_{\alpha}g_\alpha\frac{\partial}{\partial w_\alpha}.$ 
The relations $(\tilde{X}-\hat{X})(t_i)=0$ and $(\tilde{X}-\hat{X})(w_\alpha)=0$ implies that $f_i=X(t_i)$ and 
$g_\alpha=X(w_\alpha)$.
\end{proof}

The following result is a consequence of Definition \ref{DefVarieteGraduee} and Lemma \ref{LemDecompoCDV} :
\begin{thm}
\label{CorCaracBaseCDV}
Let $M=(|M|,\mathcal{O}_M)$ be a $\Z$-graded manifold of dimension $(p_j)_{j\in\Z}$. 
Then ${\Vect_M}$  is a locally free sheaf of $\mathcal{O}_M$-modules such that, 
for any trivialising open subset $V\subseteq |M|$, its restriction to $V$ satisfies 
\begin{equation*}
 \restriction{\Vect_{M}}{V} \, \simeq \, \bigg(\restriction{\mathcal{O}_M}{V} \underset{\mathcal{C}^\infty_V}{\otimes} 
\Vect_{V}\bigg) \,\, \oplus \,\, \left(\restriction{\mathcal{O}_M}{V} \underset{\R}{\otimes} \mathcal{W}^*\right) \,.
\end{equation*} 
If $(t_i,w_\alpha)_{i,\alpha}$ is a local coordinate system on $M$, the vector field $X$ admits the following splitting 
on a trivialising open set $V$ :
\begin{equation*}
X=\sum_{i}\, X_{0,i}\frac{\partial}{\partial t_i}
+\sum_{\alpha}\, X_{1,\alpha}\frac{\partial}{\partial w_\alpha}\,,
\end{equation*}
where $X_{0,i}=X(t_i)\in \mathcal{O}_M(V)$ and $X_{1,\alpha}=X(w_\alpha)\in \mathcal{O}_M(V)$ for all $i,\alpha$.
\end{thm}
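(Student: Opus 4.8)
The plan is to deduce Theorem \ref{CorCaracBaseCDV} from the graded-domain case (Lemma \ref{LemDecompoCDV}) by transporting the structure across the local isomorphism $\phi$ that is part of the definition of a graded manifold, and then checking that the resulting local descriptions glue. First I would recall that being a (locally) free sheaf of $\mathcal{O}_M$-modules is a local property: it suffices to exhibit, for each point $x\in|M|$, a trivialising open set $V$ containing $x$ on which $\restriction{\Vect_M}{V}$ is free with the stated basis. So fix a trivialising open set $V\subseteq|M|$ with isomorphism of graded locally ringed spaces $\phi_V:(V,\restriction{\mathcal{O}_M}{V})\xrightarrow{\ \sim\ }(\widetilde V,\restriction{\mathcal{O}_U}{\widetilde V})$ onto (an open subset of) the graded domain $\R^{(p_j)}$, where $U=\R^{p_0}$ and $\mathcal{O}_U=\mathcal{C}^\infty_U\otimes\overline{\mathbf{S}\mathcal{W}}$.

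The core observation is that $\phi_V$ induces an isomorphism between the sheaves of derivations: given a vector field $Y$ on $\widetilde V$, the formula $X_W:=(\phi_W)^{-1}\circ Y_{\phi(W)}\circ\phi_W$ for $W\subseteq V$ open defines a derivation of $\restriction{\mathcal{O}_M}{V}$ compatible with restrictions, because $\phi^*$ is an isomorphism of sheaves of graded algebras and the Leibniz rule \eqref{EqDerivation} is preserved by conjugation with a degree-$0$ algebra isomorphism; conversely every derivation on $V$ arises this way. Thus $\restriction{\Vect_M}{V}\simeq\phi^*\bigl(\restriction{\Vect_{U^{(p_j)}}}{\widetilde V}\bigr)$ as sheaves of modules over $\restriction{\mathcal{O}_M}{V}\simeq\phi^*\restriction{\mathcal{O}_U}{\widetilde V}$. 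Applying Lemma \ref{LemDecompoCDV} on the graded domain and pulling back, $\restriction{\Vect_M}{V}$ is free on the images under $(\phi_V)^{-1}$ of the basis $\bigl(\tfrac{\partial}{\partial t_i}\bigr)_i\cup\bigl(\tfrac{\partial}{\partial w_\alpha}\bigr)_\alpha$; writing $t_i,w_\alpha$ now for the pulled-back local coordinate system on $M$ (the abuse of notation introduced after Definition \ref{DefVarieteGraduee}), these pulled-back derivations are exactly the operators $\tfrac{\partial}{\partial t_i}$, $\tfrac{\partial}{\partial w_\alpha}$ on $\restriction{\mathcal{O}_M}{V}$ characterised by \eqref{EqCDVti}--\eqref{EqCDVwalpha}. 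Since $\mathcal{O}_U=\mathcal{C}^\infty_U\otimes\overline{\mathbf{S}\mathcal{W}}$ and $\phi^*$ is a tensor-compatible algebra isomorphism, pulling back the direct-sum decomposition \eqref{EqBaseCDV} gives the displayed isomorphism $\restriction{\Vect_M}{V}\simeq(\restriction{\mathcal{O}_M}{V}\otimes_{\mathcal{C}^\infty_V}\Vect_V)\oplus(\restriction{\mathcal{O}_M}{V}\otimes_\R\mathcal{W}^*)$, where $\Vect_V$ is the ordinary tangent sheaf of the smooth manifold underlying $V$ (identified via $\phi$ with an open subset of $\R^{p_0}$). Finally, for an arbitrary vector field $X$ on $M$, applying the uniqueness part of Lemma \ref{LemDecompoCDV} to $\phi_*(\restriction{X}{V})$ on $\widetilde V$ and pulling back yields $X=\sum_i X(t_i)\tfrac{\partial}{\partial t_i}+\sum_\alpha X(w_\alpha)\tfrac{\partial}{\partial w_\alpha}$ on $V$, with the coefficients $X_{0,i}=X(t_i)$, $X_{1,\alpha}=X(w_\alpha)$ in $\mathcal{O}_M(V)$, which is independent of the trivialisation up to the usual change-of-coordinates formula.

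I expect the main obstacle to be purely bookkeeping rather than conceptual: making precise that conjugation by the sheaf isomorphism $\phi^*$ carries derivations to derivations and is functorial in the open set (so that one genuinely gets an isomorphism of the module sheaves, not merely a bijection on global sections), and that the coefficients $X(t_i)$, $X(w_\alpha)$ computed in one trivialising chart agree, on overlaps, with those computed in another via the smooth transition maps on the base together with the induced transformation of the $w_\alpha$. Since local freeness and the local splitting are all that is asserted, none of this requires a global gluing argument, so the proof stays short; the only genuine input beyond Lemma \ref{LemDecompoCDV} is the elementary fact that a degree-preserving isomorphism of sheaves of graded algebras induces an isomorphism of their derivation sheaves as modules.
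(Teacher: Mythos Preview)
Your proposal is correct and follows exactly the approach the paper intends: the paper gives no detailed proof of Theorem \ref{CorCaracBaseCDV} at all, stating only that it is ``a consequence of Definition \ref{DefVarieteGraduee} and Lemma \ref{LemDecompoCDV}''. You have simply spelled out that consequence --- transporting the free basis and splitting of Lemma \ref{LemDecompoCDV} across the trivialising isomorphism $\phi_V$ --- which is precisely the intended argument.
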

\begin{rem}
 The graded tangent bundle of a graded manifold can be turned into a sheaf of graded Lie algebras if it is endowed 
with the graded commutator $[a,b]=ab-(-1)^{|a||b|}ba$. 
\end{rem}

Given a graded manifold $M=(|M|,\mathcal{O}_M)$, 
the map $\ev_x:\mathcal{O}_{M,x}\to \R$ allows us to define tangent vectors at a point $x\in M$. 
To see it, first remark that any vector field $X$ can be induced on the stalk. Then, if $X$ is defined locally by 
$X=\sum_{i}\, f_i\frac{\partial}{\partial t_i}+\sum_{\alpha}\, g_\alpha \frac{\partial}{\partial w_\alpha}$, 
its restriction to the stalk $[X]_x:\mathcal{O}_{M,x}\to \mathcal{O}_{M,x}$ is given by   
\begin{eqnarray*}
 [X]_x= \sum_i [f_i]_x \,\, \left[\frac{\partial}{\partial t_i}\right]_{x}
+\sum_{\alpha}\, [g_\alpha]_x \,\, \left[\frac{\partial}{\partial w_\alpha}\right]_{x},
\end{eqnarray*}
where $\left[\frac{\partial}{\partial t_i}\right]_{x}$ and $\left[\frac{\partial}{\partial w_\alpha}\right]_{x}$ satisfy 
Equations \eqref{EqCDVti} and \eqref{EqCDVwalpha} induced on $\mathcal{O}_{M,x}$, respectively. 
The \emph{tangent vector associated to} $X$, written $X_x$, is the application $X_x=\ev_x\circ [X]_x:\mathcal{O}_{M,x}\to \R$. 
More generally, tangent vectors are defined from derivations of the stalk by composition with $\ev_x$. 
The graded vector space of tangent vectors at $x$ is the \emph{graded tangent space}, denoted by $T_xM$.

\begin{rem}
It is usually not possible to recover a vector field from the tangent vectors that it defines at all points. Indeed, assume 
that a graded manifold $M$ admits $(t_1,w_1)$ as local coordinates of respective degrees $0$ and $1$. 
Consider the vector field $X$ given locally by 
$X=\frac{\partial}{\partial t_1}+w_1 \frac{\partial}{\partial w_1}$. Then the 
induced tangent vector at $x\in M$ satisfies $X_x=\restriction{\frac{\partial}{\partial t_1}}{x}$, 
where $\restriction{\frac{\partial}{\partial t_1}}{x}=\ev_x\circ \left[\frac{\partial}{\partial t_1}\right]_{x}$.
\end{rem}

\section{Application to differential graded manifolds}
\label{SectionApplication}

\begin{defn}
Let $M=(|M|,\mathcal{O}_M)$ be a $\Z$-graded manifold of dimension $(p_j)_{j\in\Z}$. 
A vector field $Q \in \Vect_M$ is 
\emph{homological} if it has degree $+1$ and commutes with itself under the graded commutator : $[Q,Q]=2Q^2=0$. 

\noindent A $\Z$-graded manifold $M$ endowed with a homological vector field $Q$ is said to be a 
\emph{differential graded manifold} or a \emph{$Q$-manifold}. We write it $(M,Q)$ and we refer to it as a \emph{dg-manifold}. 
\end{defn}
\begin{exmp}
Let $M$ be a smooth manifold of dimension $n$ and let $\Omega$ be the sheaf of its differential forms.  
The graded manifold $\mathfrak{G}_{TM[1]}=(|M|,\Omega)$ has dimension $(p_j)_{j\in\Z}$, 
with $p_j=n$ if $j=0,1$ and $p_j=0$ otherwise.
If $(x_i)_{i=1}^n$ are local coordinates on $M$, then $(x_i,dx_i)_{i=1}^n$ is a local coordinate system on 
$\mathfrak{G}_{TM[1]}$ whose degrees are respectively $0$ and $1$. 
The exterior differential $d$ is a vector field on $\mathfrak{G}_{TM[1]}$, 
since its restriction on any open set $V\subseteq |M|$ is a derivation of $\Omega(V)$. 
It is given locally by $d=\sum_i dx_i \frac{\partial}{\partial x_i}$. One can compute that $|d|=1$ and $[d,d]=2\, d^2=0$.
Therefore, $(\mathfrak{G}_{TM[1]},d)$ is a dg-manifold.
\end{exmp}

Several geometrico-algebraic structures can be encoded in terms of dg-manifolds. The easiest example is certainly the Lie algebra 
structures on a vector space.

\begin{thm}
\label{ThmLieAlg}
Let $\mathcal{V}_0$ be a vector space. Then the structures of Lie algebra on $\mathcal{V}_0$ are in one-to-one correspondence 
with the homological vector fields on 
$\mathfrak{G}_{\mathcal{V}_0[1]}=\left(\{\ast\},\mathbf{S} \left(\mathcal{V}_0^*[1]\right) \right)$.
\end{thm}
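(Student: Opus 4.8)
The plan is to unwind the structure sheaf of $\mathfrak{G}_{\mathcal{V}_0[1]}$ and identify the data of a homological vector field with the data of a Lie bracket. Since the degree-$1$ shift of $\mathcal{V}_0$ places a basis $(\xi_a)_a$ of $\mathcal{V}_0^*[1]$ in degree $1$, the algebra $\mathbf{S}(\mathcal{V}_0^*[1])$ is the exterior algebra $\bigwedge \mathcal{V}_0^*$ with $\bigwedge^k \mathcal{V}_0^*$ sitting in degree $k$; it is finite-dimensional, so completion does nothing. By Lemma \ref{LemDecompoCDV} (the point being a trivialising open set with no even coordinates), a vector field $Q$ on $\mathfrak{G}_{\mathcal{V}_0[1]}$ is determined by the sections $Q(\xi_a) \in \mathbf{S}(\mathcal{V}_0^*[1])$, and $Q$ has degree $+1$ exactly when each $Q(\xi_a)$ lies in degree $2$, i.e. $Q(\xi_a) = -\tfrac12 \sum_{b,c} f^a_{bc}\, \xi_b \xi_c$ for a uniquely determined family of scalars $f^a_{bc}$ antisymmetric in $b,c$. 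Dually, such a family is the same as a skew bilinear map $[\cdot,\cdot]:\mathcal{V}_0\times \mathcal{V}_0\to \mathcal{V}_0$, via $[e_b,e_c] = \sum_a f^a_{bc}\, e_a$ on the dual basis $(e_a)_a$ of $\mathcal{V}_0$. So far this is a bijection between degree-$1$ vector fields and skew brackets.

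Next I would compute $Q^2$ and show $Q^2 = 0$ is equivalent to the Jacobi identity. Since $Q^2$ is a derivation of degree $2$ (being half the graded commutator $[Q,Q]$), it is again determined by its values on the generators $\xi_a$, so it suffices to expand $Q(Q(\xi_a))$ using the Leibniz rule \eqref{EqDerivation} with $k=1$ and collect the coefficient of each cubic monomial $\xi_b\xi_c\xi_d$. The resulting coefficient is, up to an overall nonzero constant, the cyclic sum $\sum_{\text{cyc}} \sum_e f^e_{bc} f^a_{ed}$, which vanishes for all $a$ iff $[\cdot,\cdot]$ satisfies the Jacobi identity. Combining with the previous paragraph gives the desired one-to-one correspondence: Lie brackets $\leftrightarrow$ homological vector fields.

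The main obstacle, and the only place where care is genuinely needed, is the sign bookkeeping in the computation of $Q^2(\xi_a)$: one must apply the Koszul sign rule correctly when $Q$ (odd) passes the odd generators, and verify that the "unwanted" terms — those proportional to $Q^2(\xi_b)$-type contributions or to the symmetric part of the structure constants — cancel, leaving precisely the Jacobiator. I would organize this by fixing a graded basis, writing $Q = -\tfrac12 \sum_{a,b,c} f^a_{bc}\,\xi_b\xi_c\, \partial/\partial \xi_a$, and carefully tracking that $\partial/\partial\xi_a$ has degree $-1$ so that each term picks up the sign dictated by \eqref{EqCDVwalpha} and \eqref{EqDerivation}. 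Everything else — finiteness of the algebra, the reduction to generators, the passage between a tensor on $\mathcal{V}_0$ and its structure constants — is routine given the results already established in \ref{SectionGradedManifold}.
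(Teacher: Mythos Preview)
Your proposal is correct and essentially self-contained: you identify $\mathbf{S}(\mathcal{V}_0^*[1])$ with $\bigwedge\mathcal{V}_0^*$, use Lemma~\ref{LemDecompoCDV} to reduce a degree-$1$ derivation to its values on generators (hence to structure constants $f^a_{bc}$), and then check by a direct Koszul-sign computation that $Q^2=0$ is the Jacobi identity. All of this is sound, and your caveat about finite-dimensionality making the completion irrelevant is exactly right.

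The paper takes a different, more conceptual route. Rather than computing in coordinates, it simply observes that in one direction the Chevalley--Eilenberg differential of a Lie algebra $\mathcal{V}_0$ furnishes the required homological vector field, and that in the other direction the derived bracket construction (with reference to Kosmann-Schwarzbach) recovers a Lie bracket from $Q$; no explicit verification is given in the text. Your argument is in fact the coordinate incarnation of the same correspondence---the $Q$ you write down \emph{is} the Chevalley--Eilenberg differential---so the two approaches agree in substance. What your version buys is an explicit, reference-free proof that fits entirely inside the framework of \ref{SectionGradedManifold}; what the paper's version buys is a link to the established names (Chevalley--Eilenberg, derived bracket) and to the literature where the analogous statements for $L_\infty$-algebras and Lie algebroids are proved.
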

If $\mathcal{V}_0$ is a Lie algebra, one can show that its Chevalley-Eilenberg differential is a vector field of degree $1$ 
which squares to zero. It endows $\mathfrak{G}_{\mathcal{V}_0[1]}$ with a homological vector field. 
Conversely, one can construct a Lie bracket, known as the derived bracket, from the homological vector field. 
This construction is detailed in \cite{KosmannYvette}.

In fact, Theorem \ref{ThmLieAlg} can be seen as a corollary of two more general correspondences. If we consider an arbitrary 
$\Z$-graded vector space, we have  
\begin{thm}
\label{ThmSHLieAlg}
Let $\mathcal{V}$ be a $\Z$-graded vector space such that $\text{dim}(\mathcal{V}_i)=0$ for all $i\leqslant 0$. 
Then the structures of strongly homotopy Lie algebra on $\mathcal{V}$ are in one-to-one correspondence 
with the homological vector fields on  
$\mathfrak{G}_{\mathcal{V}}=\big(\{\ast\},\overline{\mathbf{S} \left(\Pi \mathcal{V}^*\right)} \big)$. 
\end{thm}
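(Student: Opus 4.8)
The plan is to establish the correspondence by identifying both sides with the same algebraic data: a degree-$+1$ coderivation-free description, namely a sequence of graded-symmetric multilinear brackets on $\mathcal{V}$ satisfying the generalized Jacobi identities of an $L_\infty$-algebra. First I would unpack the right-hand side. By Theorem \ref{CorCaracBaseCDV} applied to the point $\mathfrak{G}_{\mathcal{V}}=(\{\ast\},\overline{\mathbf{S}(\Pi\mathcal{V}^*)})$ (there are no even smooth coordinates, so the $\Vect_V$ summand is absent), a vector field $Q$ of degree $+1$ is determined by its action on the coordinates, i.e.\ by a degree-$+1$ map $\Pi\mathcal{V}^*\to\overline{\mathbf{S}(\Pi\mathcal{V}^*)}$, equivalently by its homogeneous Taylor components $Q=\sum_{k\geqslant 1}Q^{(k)}$ where $Q^{(k)}$ sends a generator to a homogeneous degree-$k$ element. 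Dualizing, each $Q^{(k)}$ corresponds to a linear map $\ell_k:\mathbf{S}^k(\Pi\mathcal{V})\to\Pi\mathcal{V}$ of the appropriate degree, that is, after the décalage (degree shift) by $\Pi$, a graded-symmetric bracket $[-,\dots,-]_k:\mathcal{V}^{\otimes k}\to\mathcal{V}$ of degree $2-k$. The hypothesis $\dim(\mathcal{V}_i)=0$ for $i\leqslant 0$ guarantees finite type and, crucially, that the dual pairing and the completed symmetric algebra behave well; it also ensures $Q$ has no constant term, so $Q$ is a genuine derivation vanishing at the point.

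Next I would translate the condition $Q^2=0$ into the $L_\infty$-relations. Since $Q$ is a derivation of degree $+1$, $Q^2$ is a derivation of degree $+2$, hence by Lemma \ref{LemDecompoCDV} (in the form for a point) it is determined by its action on generators; so $Q^2=0$ is equivalent to $Q^2$ vanishing on $\Pi\mathcal{V}^*$. Expanding $Q^2=\sum_{m}\big(\sum_{j+k=m+1}Q^{(j)}\circ Q^{(k)}\big)$ and collecting by polynomial degree $m$, the vanishing of the degree-$m$ piece is, under the dualization above, exactly the $m$-th generalized Jacobi identity $\sum_{i+j=m+1}\sum_{\sigma}\pm\, \ell_j(\ell_i(x_{\sigma(1)},\dots),x_{\sigma(i+1)},\dots)=0$, where the sum is over unshuffles and the signs are the Koszul signs of the graded-symmetric category recorded by the commutativity isomorphism $\mathbf{c}$. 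I would verify the sign bookkeeping by using that the free symmetric coalgebra $\mathbf{S}(\Pi\mathcal{V})$ is cofree, so that a degree-$+1$ coderivation is uniquely determined by its corestriction to $\Pi\mathcal{V}$, and that composition of coderivations corresponds to the Gerstenhaber-type bracket whose vanishing is the $L_\infty$-identity; but in the write-up I would phrase it directly via $Q$ acting on $\overline{\mathbf{S}(\Pi\mathcal{V}^*)}$ to stay within the framework of the paper.

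Having both directions, I would assemble the bijection: to a homological $Q$ associate the family $(\ell_k)_{k\geqslant 1}$ obtained from its Taylor components by dualization and décalage, and conversely to an $L_\infty$-structure $(\ell_k)$ associate the derivation $Q$ whose value on a generator $\xi\in\Pi\mathcal{V}^*$ is $\sum_k Q^{(k)}\xi$ with $Q^{(k)}$ the transpose of $\ell_k$, extended by the Leibniz rule \eqref{EqDerivation}; well-definedness of this extension to all of $\overline{\mathbf{S}(\Pi\mathcal{V}^*)}$ uses the completed-power-series structure (each Taylor coefficient of $Q(f)$ is a finite sum) exactly as in the proof of Lemma \ref{LemDecompoCDV}. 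The two assignments are mutually inverse because a degree-$+1$ derivation at a point is determined by, and freely prescribable on, the generators (again Lemma \ref{LemDecompoCDV}), and $Q^2=0\iff$ all Jacobi identities hold by the computation above.

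The main obstacle I expect is the sign/convention matching in the décalage: translating between the graded-symmetric maps $\ell_k:\mathbf{S}^k(\Pi\mathcal{V})\to\Pi\mathcal{V}$ that arise naturally from $Q$ and the conventional antisymmetric $L_\infty$-brackets on $\mathcal{V}$ itself requires careful tracking of the Koszul signs produced by the commutativity isomorphism $\mathbf{c}_{\mathcal{V},\mathcal{W}}(v\otimes w)=(-1)^{|v||w|}w\otimes v$, and of the sign introduced by identifying $\Pi\mathcal{V}^*$ with $(\Pi\mathcal{V})^*$; a secondary (milder) point is checking that infinitely many nonzero $Q^{(k)}$ cause no convergence problem, which is handled by working in $\overline{\mathbf{S}(\Pi\mathcal{V}^*)}$ and noting degree reasons bound the number of terms contributing to each fixed Taylor coefficient. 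I would relegate the detailed sign verification to a remark or cite the standard references (e.g.\ the derived-bracket literature already cited as \cite{KosmannYvette}) rather than reproduce it in full.
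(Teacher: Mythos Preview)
Your proposal is correct and follows the same conceptual route the paper indicates: identify a homological vector field on $\mathfrak{G}_{\mathcal{V}}$ with a degree-$+1$ differential on $\mathbf{S}(\Pi\mathcal{V}^*)$, and then pass to the $L_\infty$-brackets via dualization and d\'ecalage, the equation $Q^2=0$ becoming the generalized Jacobi identities. The paper itself does not spell out any of this; it simply declares that the proof ``strictly follows the correspondence of such structures with degree $+1$ differentials on the free graded algebra $\mathbf{S}(\Pi\mathcal{V}^*)$'' and cites \cite{LadaStasheff} for one direction, \cite{LadaMarkl} for the converse, and \cite{SatiSchreiStash} for further details---so your outline is considerably more detailed than what appears in the paper, but it is the same argument those references contain and the paper points to.
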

Our definition of $\mathcal{V}$ being a strongly homotopy Lie algebra is that $\mathcal{V}[-1]$ admits a sh-Lie structure 
as stated by Lada and Stasheff in \cite{LadaStasheff}. The proof for Theorem \ref{ThmLieAlg} strictly follows 
the correspondence of such structures with degree $+1$ differentials on the free graded algebra 
$\mathbf{S} \left(\Pi \mathcal{V}^*\right)$. It was proved in \cite{LadaStasheff} that each strongly homotopy Lie algebra 
implies the existence of a differential, while the converse can be found in \cite{LadaMarkl}. 
We refer to \cite[\S 6.1]{SatiSchreiStash} for additional details on this relation. 
Note that Voronov has provided an analogous definition in $\Z_2$-settings, which gives an equivalent result for supermanifolds 
 \cite{VoronovDerivedBrackets,VoronovHigherBrackets}.

An alternative generalization of Theorem \ref{ThmLieAlg} is to allow a non-trivial underlying topological space :

\begin{thm}
\label{ThmLieAlgebroid}
Let  $E\to M$ be a real vector bundle. Then the structures of real Lie algebroid on $E$ are in one-to-one correspondence 
with the homological vector fields on $\mathfrak{G}_{E[1]}=\big(|M|,\Gamma(|M|,\bigwedge E^*)\big)$.
\end{thm}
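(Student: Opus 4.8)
The plan is to make explicit the identification between the data of a Lie algebroid structure on $E\to M$ and the data of a degree $+1$ square-zero derivation of the sheaf $\Omega_E:=\Gamma(|M|,\bigwedge E^*)$, which is exactly the sheaf of functions on $\mathfrak{G}_{E[1]}$. The model computation is the classical one for the exterior derivative on $TM$: a Lie algebroid structure is a pair $(\rho,[\cdot,\cdot])$, where $\rho\colon E\to TM$ is the anchor (a bundle map) and $[\cdot,\cdot]$ is a Lie bracket on $\Gamma(E)$ satisfying the Leibniz rule $[s_1,fs_2]=f[s_1,s_2]+(\rho(s_1)f)s_2$ with $\rho$ a bracket homomorphism. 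First I would recall that, on a trivialising open set $V$ with base coordinates $(x_i)_{i=1}^n$ and a local frame $(e_a)$ of $E$ with dual frame $(\theta^a)$ of $E^*$, the local coordinate system on $\mathfrak{G}_{E[1]}$ in the sense of Theorem~\ref{CorCaracBaseCDV} is $(x_i,\theta^a)_{i,a}$ with $|x_i|=0$ and $|\theta^a|=1$, so a degree $+1$ vector field $Q$ has, by Lemma~\ref{LemDecompoCDV}, the local form $Q=\sum_a Q^a(x,\theta)\frac{\partial}{\partial\theta^a}+\sum_i R^i(x,\theta)\frac{\partial}{\partial x_i}$ with $Q^a$ of degree $2$ (hence quadratic in the $\theta$'s with coefficients in $\mathcal{C}^\infty(V)$) and $R^i$ of degree $1$ (hence linear in the $\theta$'s).

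Next I would unwind what $Q$ does on generators: $Q(x_i)=R^i=\sum_a\rho^a_i(x)\theta^a$ defines a bundle map $\rho\colon E\to TM$ by $\rho(e_a)=\sum_i\rho^a_i\frac{\partial}{\partial x_i}$, and $Q(\theta^c)=Q^c=-\tfrac12\sum_{a,b}C^c_{ab}(x)\theta^a\theta^b$ defines skew-symmetric structure functions, from which one sets $[e_a,e_b]=\sum_c C^c_{ab}e_c$ and extends by the Leibniz rule forced by the derivation property of $Q$ and the degree count (this is where one checks that the Leibniz rule with anchor term is exactly what the derivation identity \eqref{EqDerivation} produces on a product $f\theta^b$). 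The correspondence in one direction then sends $(\rho,[\cdot,\cdot])$ to the Chevalley--Eilenberg-type differential $d_E$ determined on generators by the above formulas; one checks $d_E$ is well defined independently of the frame (the transformation rules for $\rho^a_i$ and $C^c_{ab}$ are dual to those for the anchor and bracket, so the local expressions glue into a global derivation of $\Omega_E$ by Theorem~\ref{CorCaracBaseCDV}), and it visibly has degree $+1$.

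The heart of the argument — and the step I expect to be the main obstacle, or at least the most computation-heavy — is the equivalence $d_E^2=0\iff (\rho,[\cdot,\cdot])$ is a Lie algebroid. I would prove this by evaluating the degree $+2$ derivation $d_E^2$ on the generators $x_i$ and $\theta^c$, since a derivation is determined by its values on a local coordinate system (again Lemma~\ref{LemDecompoCDV}): the vanishing of $d_E^2(x_i)$ is equivalent to the compatibility $\rho([e_a,e_b])=[\rho(e_a),\rho(e_b)]$ of the anchor with the brackets (the left side being a Lie bracket of vector fields on $M$), and the vanishing of $d_E^2(\theta^c)$ — a cubic expression in the $\theta$'s whose coefficients involve $\rho^a_i\partial_i C$ and quadratic terms in $C$ — is equivalent, after antisymmetrising, to the Jacobi identity for $[\cdot,\cdot]$ on $\Gamma(E)$. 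Conversely, starting from a homological $Q$, one \emph{defines} $\rho$ and $[\cdot,\cdot]$ by the formulas above and runs the same computation in reverse; skew-symmetry of the bracket is automatic from supercommutativity of the $\theta$'s, and $Q^2=0$ then delivers both the anchor compatibility and Jacobi.

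Finally I would note that the two assignments are mutually inverse by construction: given $Q$, the induced $d_E$ agrees with $Q$ on all generators, hence everywhere; given $(\rho,[\cdot,\cdot])$, reading off $\rho$ and $C^c_{ab}$ from $d_E$ on generators returns the original data. One also checks the bijection is compatible with the $\pm$ sign conventions in \eqref{EqDerivation} for $k=1$, i.e. that the Leibniz identity $Q(f\theta)=Q(f)\theta-f\,Q(\theta)$ for $f$ of degree $0$ is the one that reproduces the anchor term with the correct sign; this is the only place a sign can go wrong, and it is the same check as in the $TM[1]$ example preceding the theorem. Since the statement asserts a one-to-one correspondence of sets, no naturality beyond this is required, and the proof is complete once the generator computations are carried out.
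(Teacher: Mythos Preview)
Your proposal is correct and follows precisely the Va\u{\i}ntrob argument that the paper defers to; in fact you supply the explicit local computation that the paper omits, relying on Lemma~\ref{LemDecompoCDV} and Theorem~\ref{CorCaracBaseCDV} exactly where the paper's framework makes them available. One small slip: in your final sign check, the derivation rule \eqref{EqDerivation} with $k=1$ and $|f|=0$ gives $Q(f\theta)=Q(f)\theta+f\,Q(\theta)$, not a minus sign, so the anchor term appears with the sign you expect without any correction.
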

 This theorem is due to Va\u{\i}ntrob \cite{Vaintrob} in the framework of supermanifolds. 
The proof is exactly the same in the $\Z$-graded case, since there are no local coordinates of even nonzero degree 
on $\mathfrak{G}_{E[1]}$ (see \emph{e.g.} \cite{KiselevVanDeLeur}). 

\vspace{0.2cm}

To conclude, the interested reader is invited to find in \cite{BonavolontaPoncin} another general equivalence. 
It links Lie $n$-algebroids to $\N$-graded manifolds (with a homological vector field and generators of degree at most $n$), 
for all $n\in\N^\times$.

\vspace{0.2cm}

\textbf{Acknowledgments.} 
This work is based on the master's thesis submitted by 
the author at the Université Catholique de Louvain in June 2015. 
He is very grateful to Jean-Philippe Michel for lots of useful discussions and for his remarks on the different versions 
of this paper, and to Yannick Voglaire for bringing the material covered in Remark \ref{RemZeroDeg} to his attention.  
The author also thanks the referees for useful suggestions. 
Part of the writing up of this paper was done with the support of a \textit{University of Leeds 110 Anniversary Research Scholarship}.


\bibliographystyle{plain}
 \bibliography{ITGG}


\end{document}